\documentclass{article}
\usepackage[numbers]{natbib}
\usepackage[T2A]{fontenc}
\usepackage{shadethm}
\usepackage{amsthm}
\usepackage{float}
\usepackage{bm}
\usepackage{mathtools}
\usepackage{graphicx}
\usepackage{subfig}

\usepackage{mdframed}
\usepackage{lipsum}

\usepackage{stmaryrd}
\usepackage{mathrsfs} 
\usepackage{amsmath}
\usepackage{amssymb}
\usepackage{wasysym}
\usepackage[hyperfootnotes=false]{hyperref}
\usepackage{geometry}
\usepackage{chngcntr}
\counterwithin*{equation}{section}

\usepackage{enumitem}

\newcommand{\R}{\mathbb{R}}

\usepackage{cleveref}
\DeclareMathAlphabet{\mathpzc}{OT1}{pzc}{m}{it}

\renewcommand{\theequation}{\arabic{section}.\arabic{equation}}

\theoremstyle{plain}
\newtheorem{thm}{Theorem}[section] 
\newtheorem{lem}[thm]{Lemma}

\theoremstyle{definition}

\theoremstyle{remark}
\newtheorem{rem}[thm]{Remark}

\usepackage[utf8]{inputenc}
\usepackage[german,english,russian]{babel}

\title{Inverse problems for quasi-linear elliptic systems\\ modeling electrolysers}
\author{Giovanni S. Alberti\footnote{\textit{E-mail address:} \href{mailto:giovanni.alberti@unige.it}{giovanni.alberti@unige.it}}, Wadim Gerner\footnote{\textit{E-mail address:} \href{mailto:wadim.gerner@edu.unige.it}{wadim.gerner@edu.unige.it}}, Matteo Santacesaria\footnote{\textit{E-mail address:} \href{mailto:matteo.santacesaria@unige.it}{matteo.santacesaria@unige.it}}}
\date{\begin{center}
		{\footnotesize	MaLGa Center, Department of Mathematics, Department of Excellence 2023-2027, \\University of Genoa, Via Dodecaneso 35, 16146 Genova, Italy}
	\end{center}}
\begin{document}\selectlanguage{english}
\maketitle

\begin{abstract}
We investigate the electrochemical processes within an electrolyser cell, which are modelled by a coupled system of second-order quasi-linear elliptic PDEs. In this context, we study an inverse problem aiming to reconstruct both the non-linear diffusion coefficients and the phenomenological relation defining the electric potential. Our main results state that boundary measurements alone are not enough to reconstruct these non-linear quantities. However, we show that a combination of boundary and interior measurements allow for their unique reconstruction. To achieve this result we generalise a linearisation result in the context of the scalar quasi-linear Calder\'{o}n problem, [Sun, Math. Z. 221 (1996)], to the setting of a system of PDEs with non-local nonlinearities. In contrast to the Calder\'{o}n case, the generalised linearisation does not ``freeze" the coefficients. We show that interior measurements are precisely what is required to achieve this freezing and thus enable the unique reconstruction.
        \end{abstract}

		\noindent
		{\small \textit{Keywords}: Electrolyser, Inverse problems, System of elliptic PDEs, Calderón type problems}

        \noindent
		{\small \textit{2020 MSC}: 35J57, 35R30, 35Q92, 70F17, 92E99}
		\section{Introduction}
		
		Hydrogen is utilised in many industrial processes \cite{HauLohWat11,QaGuAb24,Lewis21,KumKum15,Block04}, including ammonia production, which is used in fertilisers \cite{Smil01}, and as such sustains the growing global need for food. Many common ways to produce hydrogen involve carbon dioxide as a by-product \cite{Wis19}, which leads to environmental pollution. Promising alternatives are proton exchange membrane (PEM) electrolysis and anion exchange membrane (AEM) electrolysis. Neither produces any carbon dioxide or other harmful by-products, provided renewable energy is used to generate the electricity required for the reactions. The hydrogen produced in this way is also known as green hydrogen \cite{SquMagNic23}. PEM water electrolyser technology goes back to the 1960s \cite{Grubb59A,Grubb59B,Grubb60,RusNutFic73} and is well-developed. Its main disadvantage is that its membrane contains materials such as platinum, iridium or ruthenium as catalysts, which makes these devices expensive \cite{KimLeeQiKim24}. In contrast, AEM electrolysis requires catalysts that are less costly and may therefore provide a good alternative to PEM electrolysers \cite{KimLeeQiKim24}. However, research on AEM electrolysers began more recently \cite{WuScott11,KumLim22,Leng12} and as such has not yet reached the same maturity as its PEM counterpart. The underlying working principles pertaining to PEM electrolysers and AEM electrolysers are the same. Electricity is injected into a device containing water, which in turn is eventually split into hydrogen and oxygen by means of electrochemical reactions. The hydrogen can then be extracted for industrial applications.
		
		The goal of the present work is to study a simplified PEM\slash AEM-electrolyser model and to understand what types of measurements must be performed in order to uniquely determine the relationship between the electric potential $\phi$ within the electrolyser and the temperature and the distinct ion concentrations within the device. This has the potential to help improve AEM electrolyser designs in order to increase their lifetime.

        Our main result states that a combination of boundary and interior measurements are enough to uniquely reconstruct the unknown coefficient functions in a  quasi-linear elliptic system of PDEs modeling the electrolyser. We further show that a unique reconstruction cannot be achieved by boundary measurements alone. The main ingredient in the unique reconstruction result is a generalisation of a linearisation lemma by Sun \cite{Sun96}. The original setup \cite{Sun96} dealt with the quasi-linear version of the scalar Calderón problem. Our generalisation, cf.\ \Cref{S3L8}, applies to systems of equations and also allows for the presence of non-local non-linearities which is crucial for the system of equations studied in the present work. This linearisation lemma may be useful in the analysis of other inverse problems and is therefore of independent interest. In contrast to the scalar Calderón problem \cite{Sun96}, the linearisation procedure for our specific system of equations will not freeze the variables. Instead, due to the non-local dependence, the linearised variables remain space dependent. We then make use of the interior measurements in order to achieve a freezing of the variables.

        The structure of the paper is as follows: In \Cref{Section2} we describe the system of equations and the measurements that we consider throughout the manuscript. In \Cref{Section3} we present the main results and discuss their relation to previous literature. In \Cref{Section4} we present the proofs of the main results, including our linearisation lemma. The appendix discusses the derivation of the model studied in this paper in more detail, and potential extensions.

        \section{Setup of the problem}
        \label{Section2}
		\subsection{The equations and the unknowns}
		\label{Inverse}
		We use the following equations to model our electrolyser device:
		\begin{gather}
			\nonumber
			\partial_tc_i-\operatorname{div}(D_i\nabla c_i)=g_i\text{ for }1\leq i\leq M\text{, } \operatorname{div}(\epsilon \nabla \phi)=q\cdot c.
		\end{gather}
        They arise from the macroscopic Maxwell equations, which describe the electric potential, and Fick's laws, which describe the evolution of the ion concentrations. These equations are coupled via the free charges and currents that are generated by the charged ions. See \Cref{Model} for a detailed derivation of these equations, as well as \Cref{Electrostatic} for a possible extension  of our model.

		The symbols in the equations have the following meanings:
		\begin{itemize}[noitemsep]
			\item $M$ is the number of distinct particle species in our electrolyser;
			\item $c_i=c_i(t,x)$ denotes the concentration of the $i$-th particle species and we set $c=(c_1,\dots,c_M)$;
			\item $T=T(t,x)$ is the (scalar) temperature of the system;
			\item $D_i=D_i(t,c,T,x)$ is the scalar diffusion coefficient of the $i$-th species, which we assume to be bounded below and above by some positive constants;
			\item $g_i=g_i(t,c,T,x)$ is the source-term corresponding to the $i$-th species production\slash annihilation;
			\item $\epsilon=\epsilon(x)$ is the electric permeability, which is assumed to be bounded below and above by some positive constant;
            \item $\phi=\phi(t,c,T,x)$ is the electric potential;
			\item $q_i\in \mathbb{R}$ is the electric charge of the $i$-species and $q\cdot c$ is the standard Euclidean inner product of $q=(q_1,\dots,q_M)$ and $c=(c_1,\dots,c_M)$.
		\end{itemize}

		 A priori, we do not have knowledge of the diffusion coefficients $D_i$, of the precise structure of the sources $g_i$, or of the potential $\phi$. As indicated in (\ref{S1E15}), some additional structural assumptions on the electric potential can be made in principle. In turn, using Nernst's equation, Butler-Volmer's equation and Tafel's equation, some more specific expressions for the distinct electric potential contributions may be derived \cite{Lawand24,MajHaasEllNaz23,SeiMitBon25}. However, in any case, measurements are required to determine the precise values of the parameters appearing in these models. In the present work, we do not make any structural assumptions on the electric potential $\phi$; we only assume that it can be expressed as a function of time, position, temperature and the particle-concentrations.
		
		In our application, we know the number of distinct particle species $M$ and their corresponding charges $q=(q_1,\dots,q_M)$ (in Coulomb). We assume that $\epsilon=\epsilon(x)$ is a known quantity. In a zero order approximation, one could assume that $\epsilon$ is constant throughout the domain $\Omega$ occupied by the electrolyser, or that it is a simple function with constant (but possibly distinct) values in certain distinguished areas of the domain, for instance the membrane, the anode, the cathode and the area containing the electrolyte.
		
		The inverse problem we want to deal with is to reconstruct $D_i$, $g_i$ and $\phi$ as functions of $t,c,T,x$ from certain measurements, see the coming \Cref{Measure} for a discussion about the type of measurements we can perform. We notice that we have $M+1$ equations in total and $2M+1$ unknown functions ($M$ functions $D_i$, $M$ functions $g_i$ and the potential $\phi$). It is therefore not clear to what extent all these quantities may be reconstructed; possibly, some additional structural assumptions on the sources may have to be made in order to be able to recover all these quantities uniquely. The main reconstruction result of the present work focuses on the special case $g_i=0$ for all $i$, see \Cref{Static}, to gain first insights into this inverse problem, while the case of non-zero sources is left for future investigations.
        
		\subsection{The measurements}
		\label{Measure}
		In order to obtain the boundary values of $\phi$ we can, as discussed in detail in \Cref{Potential}, use a voltmeter to measure the voltage between any two points on the boundary. Current density-voltage profiles are also used in AEM electrolyser cells to evaluate their performance \cite{Lawand24}.
		
		We then need to specify the boundary conditions that we may prescribe and the boundary measurements that we may perform on $T$ and $c$.
		Let us start with the temperature. We are able to prescribe $T|_{\partial\Omega}$ by controlling the temperature of the electrolyte at the boundary. In addition, for any boundary point $x\in \partial\Omega$, we can measure the temperature at nearby points that are displaced in the normal direction $\mathcal{N}(x)$, i.e.\ at $x+r\mathcal{N}(x)$ for $0<r\ll 1$, where $r$ is the distance of this point from $\partial\Omega$. For small $r$ we can then take
		\begin{gather}
			\nonumber
			\frac{T(x+r\mathcal{N}(x))-T(x)}{r}\approx \mathcal{N}(x)\cdot \nabla T(x)
		\end{gather}
		and hence measure the normal trace of the temperature on $\partial\Omega$.

        Regarding the concentrations, we may consider here the simplest electrolyser setup, cf.\ \cite{Leng12}, which contains $\mathrm{H_2 O}$, $\mathrm{OH}^{-}$, $\operatorname{O}_2$, $\operatorname{H}_2$ and $\operatorname{e}^{-}$:
        \begin{gather}
			\label{S1E1}
			\text{Anode: }2\text{OH}^{-}\rightarrow \text{H}_2\text{O}+\frac{\text{O}_2}{2}+2\text{e}^-
			\\
			\label{S1E2}
			\text{Cathode: }2\text{H}_2\text{O}+2\text{e}^-\rightarrow \text{H}_2+2\text{OH}^{-}
			\\
			\label{S1E3}
			\text{Overall reaction: }\text{H}_2\text{O}\rightarrow \text{H}_2+\frac{\text{O}_2}{2}
		\end{gather}
        Regarding the boundary conditions that we may control let us first consider $\mathrm{H_2 O}$. We can control the water inflow (at least on a part of $\partial\Omega$), which amounts to prescribing $\mathcal{N}\cdot (D_{\mathrm{H_2 O}}\nabla c_{\mathrm{H_2 O}})$ (notice that in our simplified model we assume that the liquid water also follows Fick's law). In addition, in an ideal situation, all the $\operatorname{OH}^{-}$ that is produced at the cathode recombines at the anode, cf.\ (\ref{S1E1}) and (\ref{S1E2}), and so no $\operatorname{OH}^{-}$ escapes the system, i.e.\ $\mathcal{N}\cdot \left(D_{\operatorname{OH}^{-}}\nabla c_{\operatorname{OH}^{-}}\right)=0$. Concerning $\operatorname{O}_2$ and $\operatorname{H}_2$, we observe that they are removed from the system through a pipe and so, by adjusting the strength at which they are removed from the system, we may also prescribe (at a boundary portion) the values of $\mathcal{N}\cdot \left(D_{\operatorname{H}_2}\nabla c_{\operatorname{H_2}}\right)$ and $\mathcal{N}\cdot \left(D_{\operatorname{O}_2}\nabla c_{\operatorname{O_2}}\right)$. Finally, regarding $\operatorname{e}^{-}$, the induced current that we can control corresponds to $\mathcal{N}\cdot\left(D_{\operatorname{e}^{-}}\nabla c_{\operatorname{e}^{-}}\right)$ (we assume that the electrons also follow Fick's law).
		
		We conclude that in each case $\mathcal{N}\cdot (D_i\nabla c_i)$ can be prescribed (at a boundary portion) or is fixed and known a priori. We assume for simplicity that we have the freedom to prescribe $\mathcal{N}\cdot (D_i\nabla c_i)$ on all of $\partial\Omega$ and for all $1\leq i\leq M$.
		
		Further, near the entrance of the pipe\slash exit of the electrolyser at which the hydrogen $\operatorname{H}_2$ and oxygen $\operatorname{O}_2$ leave the electrolyser cell, we may measure the ion-concentrations so that we may measure the responses $c_i|_P$ for some boundary portion $P\subset \partial\Omega$. To simplify the situation we assume throughout this work that $P=\partial\Omega$.

        As we shall see, boundary measurements alone are not enough to determine the functional dependence of the potential $\phi$ in the interior of the cell, cf.\ \Cref{S2T1}. Therefore, we will also consider a hybrid inverse problem where we assume that it is possible for us to measure the temperature at interior points of the domain $\Omega\subseteq\mathbb R^3$ occupied by the electrolyser cell. We point out that it is indeed possible to measure the temperature in the interior of electrolyser cells in practice \cite{Hua24}.
		
		To summarise, we may prescribe and measure the following quantities
		\begin{align}
			\label{S1E16}
			&\text{Prescribe: }&&\left(T,\mathcal{N}\cdot (D_1\nabla c_1),\dots,\mathcal{N}\cdot (D_M\nabla c_M)\right)\text{ on }[0,\infty)\times \partial\Omega,
			\\
			\label{S1E17}
			&\text{Measurements I: }&&\left(\mathcal{N}\cdot \nabla T,c_1,\dots,c_M\right)\text{ on }[0,\infty)\times\partial\Omega,
			\\
			\label{S1E18}
			&\text{Measurements II: }&&\phi(t,c(t,x),T(t,x),x)-\phi(t,c(t,y),T(t,y),y)\text{ for all }x,y\in \partial\Omega,t\geq 0,
			\\
			\label{S1E19}
			&\text{Measurements III: }&&T(t,x)\text{ for all }t\geq 0\text{, }x\in \Omega.
		\end{align}
		In practice, one has only a finite sample of these measurements, but we assume here that we have access to infinitely many measurements, as is typical in the analysis of inverse boundary value problems.
		
		 In the setting of the time-dependent problem one needs to additionally prescribe the initial condition at time $t=0$ to obtain a well-posed system of equations. However, in the present manuscript we only deal with the static problem, so we omit this discussion.
		\subsection{The static source-free case}
		\label{Static}
		In the present work we want to initiate the analysis of the inverse problem described in \Cref{Inverse} assuming that we can prescribe and measure the quantities of interest as described in (\ref{S1E16})-(\ref{S1E19}). We want to assume here that our electrolyser cell reached an equilibrium state and therefore all quantities involved are time-independent and satisfy the corresponding static equations
		\begin{gather}
			\label{S1E20}
			-\operatorname{div}(D_i(c(x),T(x),x)\nabla c_i(x))=g_i(c(x),T(x),x)\text{ and }\operatorname{div}(\epsilon(x) \nabla (\phi(c(x),T(x),x))=q\cdot c(x).
		\end{gather}
		Further, we make at some points the (unphysical) assumption that $g_i=0$ for all $i$. The situation in which $g_i\neq 0$ is much more complex than its source-free counterpart.
		We hence focus at some instances on the following equations
		\begin{gather}
			\label{S1E21}
			\operatorname{div}(D_i(c(x),T(x),x)\nabla c_i(x))=0\text{, }1\leq i\leq M\text{ and }\operatorname{div}(\epsilon(x) \nabla (\phi(c(x),T(x),x))=q\cdot c(x),
		\end{gather}
		where $\epsilon=\epsilon(x)$ is assumed to be known, $D_i=D_i(c,T,x)$ and $\phi=\phi(c,T,x)$ are unknown functions of the concentrations, the temperature, and the position. Further, $q\in \mathbb{R}^M$ is a known constant vector.
        \bigskip

        \noindent\fbox{\begin{minipage}{0.985\textwidth}
\paragraph{The inverse problem.} We consider the reconstruction of the functions $D_i$ and $\phi$ according to:
        \begin{align}
			\label{S1E22}
			&\text{Prescribe: }&&\left(T|_{\partial\Omega},c_1|_{\partial\Omega},\dots,c_M|_{\partial\Omega}\right),
			\\
			\label{S1E23}
			&\text{Measurements I: }&&\left(\mathcal{N}\cdot \nabla T,\mathcal{N}\cdot (D_1\nabla c_1),\dots,\mathcal{N}\cdot (D_M\nabla c_M)\right),
			\\
			\label{S1E24}
			&\text{Measurements II: }&&\phi(c(x),T(x),x)-\phi(c(y),T(y),y)\text{ for all }x,y\in \partial\Omega,
			\\
			\label{S1E25}
			&\text{Measurements III: }&&T(x)\text{ for }x\in\Omega.
		\end{align}
\end{minipage}}

        \medskip\noindent
		Note that, compared to (\ref{S1E16})-(\ref{S1E19}), we prescribe here the Dirichlet data and measure the Neumann data. This is due to the fact that from a mathematical perspective it is easier to deal with the quasi-linear forward problem for Dirichlet boundary conditions rather than Neumann boundary conditions. This is common practice in the mathematical analysis of the Calderón problem \cite{calderon1980inverse,sylvester1987global}, an inverse problem with boundary data modeled by a scalar elliptic PDE.
		We shall see that using boundary measurements alone, (\ref{S1E23}) and (\ref{S1E24}), we are only able to reconstruct the potential $\phi$ within an ``infinitesimal'' neighbourhood of $\partial\Omega$, see \Cref{S2R2}, and that it is not possible to reconstruct $\phi$ at interior points unless one takes into account additional measurements, such as interior temperature measurements (\ref{S1E25}).

		\section{Main results}
		\label{Section3}
		
        For any subset $U\subset\mathbb{R}^N$ and $0\leq \alpha\leq 1$, we denote  the space of $\alpha$-H\"{o}lder continuous functions by $C^{0,\alpha}(U)$, with the convention $C^{0,0}(U)\equiv C^0(U)$. We denote the space of functions in $C^{0,\alpha}(U)$ with finite $\alpha$-H\"{o}lder-norm by $C^{0,\alpha}_{b}(U)$.
		If $U$ is open, we set 
        \begin{gather}
            \nonumber
        \dot{C}^1_b(\overline{U}):=\left\{f\in C^1(U)\mid \sup_{x\in U}|\nabla f(x)|<\infty\text{ and }\nabla f\text{ extends continuously to }\overline{U}\right\}\text{ and }
        \\
        \nonumber
        \dot{C}_b^{1,1}(\overline{U}):=\left\{f\in \dot{C}^1_b(\overline{U})\mid \sup_{x,y\in U,x\neq y}\frac{|\nabla f(x)-\nabla f(y)|}{|x-y|}<\infty\right\}.
        \end{gather}

Throughout the paper, we assume the following ellipticity conditions in $\Omega$:
\begin{equation}\label{eq:ellipt}
\epsilon \ge \lambda,\qquad D_i\ge \lambda\;\text{for every $i=1,\dots,M$,}\qquad \partial_s\phi\ge\lambda    
\end{equation}
for some $\lambda>0$, where $\phi=\phi(p,s,x)$, $(p,s,x)\in \mathbb{R}^{M+1}\times \Omega$. In the following, we consider the quantity $\phi(c(x),T(x),x)$, so that we essentially demand that the potential is monotonically increasing as a function of the temperature (see Section~\ref{subssec:functional_assumptions} for some motivations on this condition).
Upon applying the chain rule in (\ref{S1E20}) to $\phi(c(x),T(x),x)$, it becomes clear that the assumptions in (\ref{eq:ellipt}) make our system of equations elliptic.

        \subsection{The forward problem}
        Recall that we consider the static equations
		\begin{gather}
			\label{S2E1}
			-\operatorname{div}(D_i(c(x),T(x),x)\nabla c_i(x))=g_i(c(x),T(x),x)\text{ and }\operatorname{div}(\epsilon(x) \nabla (\phi(c(x),T(x),x))=q\cdot c(x).
		\end{gather}
        The following result guarantees the existence of weak solutions for the forward problem.
        \begin{thm}[Existence of weak solutions]
			\label{S3C4}
			Let $\Omega\subset\mathbb{R}^3$ be a bounded $C^1$-domain, $M\in \mathbb{N}$, $q\in \mathbb{R}^M$, $\phi\in \dot{C}_b^1(\mathbb{R}^{M+1}\times \overline{\Omega})$, $D_i\in C^0_b(\mathbb{R}^{M+1}\times \overline{\Omega})$, $g_i\in C^0_b(\mathbb{R}^{M+1}\times \overline{\Omega})$ for $1\leq i\leq M$, $\epsilon\in L^{\infty}(\Omega)$. Suppose further that  $(\epsilon,D_i,\phi)$ satisfy the ellipticity condition (\ref{eq:ellipt}). Then for every $(\gamma,\tau)\in \left(W^{\frac{1}{2},2}(\partial\Omega)\right)^{M+1}$ there exists at least one weak solution $(c,T)\in \left(H^1(\Omega)\right)^{M+1}$ of the following boundary value problem
			\begin{gather}
				\label{S3E9}
				-\operatorname{div}(D_i(c,T,x)\nabla c_i)=g_i(c,T,x)\text{ and }\operatorname{div}(\epsilon \nabla (\phi(c,T,x)))=q\cdot c\text{ in }\Omega\text{, }(c,T)|_{\partial\Omega}=(\gamma,\tau),
			\end{gather}
			where $c=(c_1,\dots,c_M)$, $1\leq i\leq M$.
		\end{thm}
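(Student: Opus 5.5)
The natural strategy is a Schauder (or Leray--Schauder) fixed point argument built on the triangular structure of the system. We first reduce to homogeneous boundary data by choosing $H^1$-extensions of $(\gamma,\tau)$. The key observation is that the second equation is linear in the new unknown $\Phi:=\phi(c,T,x)$. Since $\partial_s\phi\ge\lambda$ and $\phi$ is globally $C^1$ with bounded gradient, for fixed $(p,x)$ the map $s\mapsto\phi(p,s,x)$ is a $C^1$-diffeomorphism of $\mathbb{R}$. Its inverse $\psi(p,\cdot,x)$ therefore satisfies $T=\psi(c,\Phi,x)$, with $\psi$ continuous, globally Lipschitz in $(p,\Phi)$ (derivatives bounded by $\|\nabla\phi\|_\infty/\lambda$), and $\partial_\Phi\psi\le 1/\lambda$. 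The chain rule for Lipschitz compositions of $H^1$ functions then gives $T=\psi(c,\Phi,x)\in H^1$ whenever $c,\Phi\in H^1$. A technical caveat is that $\psi$ may only be continuous in $x$; since $\phi\in\dot C^1_b$ includes $x$-derivatives, however, $\psi$ is Lipschitz in $x$ as well, so the chain rule applies. The boundary datum for $\Phi$ is $\phi(\gamma,\tau,x)|_{\partial\Omega}$. This is in $W^{1/2,2}(\partial\Omega)$ because it is the trace of $\phi(\tilde c,\tilde T,x)\in H^1(\Omega)$ for $H^1$-extensions $\tilde c,\tilde T$.

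Next we define the map $\mathcal{F}$ on $X:=(L^2(\Omega))^{M+1}$ as follows. Given $(\bar c,\bar\Phi)\in X$, set $\bar T:=\psi(\bar c,\bar\Phi,x)$, which is measurable. Solve the linear, uncoupled Dirichlet problems
\[
-\operatorname{div}(D_i(\bar c,\bar T,x)\nabla c_i)=g_i(\bar c,\bar T,x),\qquad c_i|_{\partial\Omega}=\gamma_i .
\]
The coefficients are bounded, measurable and $\ge\lambda$, and the right-hand sides are bounded, so Lax--Milgram applies. Then solve
\[
\operatorname{div}(\epsilon\nabla\Phi)=q\cdot c,\qquad \Phi|_{\partial\Omega}=\phi(\gamma,\tau,\cdot),
\]
using the freshly computed $c$; this is again Lax--Milgram, since $\epsilon\in L^\infty$ and $\epsilon\ge\lambda$. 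Set $\mathcal{F}(\bar c,\bar\Phi):=(c,\Phi)$. The energy estimates give $\|c\|_{H^1}\le C(\lambda,\|D\|_\infty,\|g\|_\infty,\|\gamma\|)$, independently of $(\bar c,\bar\Phi)$, and hence also a uniform bound on $\|\Phi\|_{H^1}$. So $\mathcal{F}$ maps $X$ into a fixed bounded set of $(H^1)^{M+1}$, which is compact in $X$ by Rellich (valid for bounded $C^1$ domains). Restricting $\mathcal{F}$ to a large closed ball $B\subset X$ gives a convex, closed set with $\mathcal{F}(B)\subset B$ relatively compact.

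The main step is \emph{continuity} of $\mathcal{F}$ on $X$. Suppose $(\bar c_n,\bar\Phi_n)\to(\bar c,\bar\Phi)$ in $L^2$. Pass to an a.e.\ convergent subsequence. By continuity of $\psi$, $D_i$ and $g_i$, the coefficients converge a.e., and they are uniformly bounded. The solutions $c^n$ are bounded in $H^1$, so a further subsequence converges weakly in $H^1$ and strongly in $L^2$ to some $c$. To pass to the limit in the weak formulation, write
\[
\int D_i^n\nabla c_i^n\cdot\nabla v=\int \nabla c_i^n\cdot (D_i^n\nabla v).
\]
Here $D_i^n\nabla v\to D_i\nabla v$ strongly in $L^2$ by dominated convergence, while $\nabla c_i^n\rightharpoonup\nabla c_i$ weakly, so the product converges. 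Hence $c$ solves the limit problem. Uniqueness for that linear problem shows the whole sequence converges. The $\Phi$-equation depends continuously on $c$ in $L^2$, hence $\mathcal{F}$ is continuous.

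Schauder's theorem then yields a fixed point $(c,\Phi)$. Setting $T:=\psi(c,\Phi,x)\in H^1$, we have $\phi(c,T,x)=\Phi$. The pair $(c,T)$ solves \eqref{S3E9} weakly, and the traces match: $T|_{\partial\Omega}=\psi(\gamma,\phi(\gamma,\tau,\cdot),\cdot)=\tau$. This last trace identity is obtained by approximating with smooth functions, using that Lipschitz composition commutes with traces. I expect the continuity step, with its a.e.-convergence and weak–strong product argument, to be the delicate part, together with the regularity of $\psi$ in $x$ needed for the chain rule.
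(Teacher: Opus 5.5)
Your proof is correct, and its core is the same as the paper's. The paper also inverts $s\mapsto\phi(p,s,x)$ (\Cref{S3L3}) and works with the unknowns $(c,\sigma)$, $\sigma=\phi(c,T,x)$, so that the potential equation becomes linear. It then sets $T=h(c,\sigma,x)$ at the end.

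The difference lies in the fixed-point step. The paper proves the general \Cref{S3L1}, in which all coefficients and sources are frozen at the same input $v$, so the $\sigma$-equation is driven by the old $c$. Because $G_{M+1}=-p\cdot q$ grows linearly, that map has no uniformly bounded range. The paper therefore uses Schaefer's theorem and separately bounds the set $\{v=\lambda A(v)\}$ via the triangular growth condition $|G_N|\le C_1+C_2|\hat p|$.

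Your sequential (Gauss--Seidel-type) update feeds the freshly computed $c$ into the $\Phi$-equation, which builds that triangular structure into the map itself. So $\mathcal F$ lands in a fixed $H^1$-ball, Schauder on an $L^2$-ball suffices, and Step~4 of the paper's argument disappears.

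Your continuity proof is also different. Weak--strong products plus uniqueness of the limit linear problem give weak $H^1$ (strong $L^2$) convergence, which is all you need. The paper's energy estimate \eqref{S3E4} with dominated convergence gives strong $H^1$ convergence.

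Your direct setup has two side benefits. It handles $\epsilon\in L^\infty$ without comment, whereas \Cref{S3L1} formally assumes $\nu_{M+1}=\epsilon\in C^0_b$; this is harmless in the paper, since $\epsilon$ does not depend on $v$. You also justify points the paper leaves implicit: $T\in H^1$ by the chain rule, $\phi(\gamma,\tau,\cdot)\in W^{\frac12,2}(\partial\Omega)$, and $T|_{\partial\Omega}=\tau$.

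What the paper's route buys in exchange is a stand-alone existence lemma for general diagonal systems with triangular growth, reusable elsewhere (cf.\ \Cref{S3R2}).
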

The proof of \Cref{S3C4} consists of two steps. First, we rewrite the system in an equivalent form, which is more symmetric in the unknowns $D_i$ and $\phi$; second, we use a standard Schaefer fix point argument to show that this equivalent system of equations admits a weak solution, cf.\ \Cref{Existence}.

The uniqueness of solutions is more subtle. We obtain the following partial result regarding the uniqueness of weak solutions, cf.\ \Cref{Uniqueness} for the proof; see also \Cref{S3R6} for an example of a related system of equations where uniqueness fails.
        \begin{thm}[Uniqueness of source-free weak solutions with constant boundary conditions]
			\label{S3P5}
			Let $\Omega\subset\mathbb{R}^3$ be a bounded domain with $C^1$-boundary and $M\in \mathbb{N}$. Let $D_i\in C^{0}_b(\mathbb{R}^{M+1}\times \overline{\Omega})$ for $1\leq i\leq M$, $q\in \mathbb{R}^M$, $\epsilon\in L^{\infty}(\Omega)$ and $\phi\in \dot{C}_b^1(\mathbb{R}^{M+1}\times \overline{\Omega})$. Suppose further that $(\epsilon,D_i,\phi)$ satisfy the ellipticity condition (\ref{eq:ellipt}). Then for every $(\gamma,\tau)\in \mathbb{R}^M\times W^{\frac{1}{2},2}(\partial\Omega)\subset \left(W^{\frac{1}{2},2}(\partial\Omega)\right)^{M+1}$ there is a unique weak solution $(c,T)\in \left(H^1(\Omega)\right)^{M+1}$ solving the boundary value problem
			\begin{gather}
				\label{S3E11}
				\operatorname{div}\left(D_i(c,T,x)\nabla c_i\right)=0\text{ and }\operatorname{div}(\epsilon \nabla (\phi(c,T,x)))=q\cdot c\text{ in }\Omega\text{, }(c,T)|_{\partial\Omega}=(\gamma,\tau).
			\end{gather}
			In particular, $c=\gamma$ throughout $\Omega$.
		\end{thm}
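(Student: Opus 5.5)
The plan has two stages. First we show that every weak solution has $c\equiv\gamma$. Then, with $c$ fixed, uniqueness reduces to a scalar quasi-linear equation for $T$.

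\emph{Step 1: $c=\gamma$.} Let $(c,T)\in (H^1(\Omega))^{M+1}$ be any weak solution of (\ref{S3E11}). Fix $i$. Since $\gamma_i$ is constant, $c_i-\gamma_i\in H^1_0(\Omega)$ is an admissible test function in the weak formulation of $\operatorname{div}(D_i(c,T,x)\nabla c_i)=0$. This gives
\begin{gather}
\nonumber
\int_\Omega D_i(c,T,x)\,|\nabla c_i|^2\,dx=0 .
\end{gather}
The coefficient $x\mapsto D_i(c(x),T(x),x)$ is measurable and bounded, because $D_i\in C^0_b$ and $(c,T)$ is measurable, so the integral makes sense. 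Moreover $D_i\ge\lambda$ by (\ref{eq:ellipt}). Hence $\nabla c_i=0$ a.e., and Poincaré's inequality on $H^1_0$ gives $c_i=\gamma_i$. The point is that the equations for $c$ are decoupled in divergence form, so the nonlinearity plays no role once the boundary datum is constant.

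\emph{Step 2: reduction to a scalar problem for $T$.} Inserting $c\equiv\gamma$, the remaining equation becomes
\begin{gather}
\nonumber
\operatorname{div}(\epsilon\nabla \psi(T,x))=q\cdot\gamma,\quad T|_{\partial\Omega}=\tau,\qquad \psi(s,x):=\phi(\gamma,s,x).
\end{gather}
Here $q\cdot\gamma$ is a constant. Set $u:=\psi(T,x)$. I first have to check that $u\in H^1(\Omega)$ and that the chain rule holds,
\begin{gather}
\nonumber
\nabla u=\partial_s\psi(T,x)\nabla T+\nabla_x\psi(T,x).
\end{gather}
This holds because $\phi\in\dot C^1_b$ has bounded gradient, hence is globally Lipschitz. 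Its trace is $u|_{\partial\Omega}=\psi(\tau,\cdot)$, which depends only on the data.

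Now suppose $T_1,T_2$ are two solutions, and set $u_j:=\psi(T_j,x)$. Then $u_1-u_2\in H^1_0(\Omega)$ and it satisfies $\operatorname{div}(\epsilon\nabla(u_1-u_2))=0$ weakly. Testing with $u_1-u_2$ and using $\epsilon\ge\lambda$ gives $u_1=u_2$ a.e.

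Finally, $\partial_s\psi\ge\lambda>0$ makes $s\mapsto\psi(s,x)$ strictly increasing for each $x$, hence injective. Therefore $T_1=T_2$ a.e.

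\emph{Main obstacle.} The delicate point is the chain rule and trace statement for $\psi(T,x)$ when $T$ is merely $H^1$ and $\psi$ depends on $x$ too. I would handle it by approximating $T$ by smooth functions $T_k\to T$ in $H^1$ and a.e. The Lipschitz bound together with the continuity of $\nabla\phi$ gives convergence of $\nabla\psi(T_k,x)$ by dominated convergence along a subsequence. Continuity of the trace operator then identifies the boundary value as $\psi(\tau,\cdot)$. This requires the $x$-dependence to be controlled up to $\overline\Omega$, which $\dot C^1_b(\mathbb{R}^{M+1}\times\overline\Omega)$ provides.

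Existence follows from \Cref{S3C4} with $g_i=0$. Combined with the uniqueness above, this gives the unique weak solution, with $c=\gamma$.
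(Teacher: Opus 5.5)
Your proposal is correct and follows essentially the same route as the paper. You test with $c_i-\gamma_i\in H^1_0(\Omega)$ to get $c=\gamma$, then use uniqueness for the linear Dirichlet problem $\operatorname{div}(\epsilon\nabla\sigma)=q\cdot\gamma$, $\sigma|_{\partial\Omega}=\phi(\gamma,\tau,\cdot)$, and finally invert in the $s$-variable.

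The one minor difference is in existence and the inversion step. The paper writes the solution down explicitly as $(\gamma,h(\gamma,L^{-1}_{\epsilon}(q\cdot\gamma)(x),x))$, with $h$ the inverse from \Cref{S3L3}, and reuses this formula in the proof of \Cref{S2T3}. You instead invoke \Cref{S3C4} with $g_i=0$ and use pointwise injectivity in place of $h$; both work.
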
 
        The key observation in the proof of \Cref{S3P5} is that constant functions $\gamma_i$ satisfy $\nabla \gamma_i=0$, and thus they satisfy the PDE $\operatorname{div}(D_i(\gamma,T,x)\nabla \gamma_i)=0$ and the corresponding constant boundary conditions.
        \subsection{The inverse problem}
		\subsubsection{Boundary measurements}
		Let us first discuss how to formally define boundary measurements.

		If the sources $g_i$ are square integrable we conclude from (\ref{S2E1}) that $\operatorname{div}(D_i\nabla c_i)\in L^2(\Omega)$ and $D_i\nabla c_i\in L^2(\Omega)$, so that we can make sense of the normal traces $\mathcal{N}\cdot \left(D_i\nabla c_i\right)\in W^{-\frac{1}{2},2}(\partial\Omega)$, c.f.\ \cite[I \S 2 Theorem 2.5]{GR86}.
        The temperature is of class $H^1(\Omega)$ as a weak solution so that $\nabla T\in L^2(\Omega)$. It follows further from (\ref{S2E1}), by means of the chain rule, that under the assumptions of the upcoming \Cref{S2T1} we have $\operatorname{div}(\nabla T)\in L^1(\Omega)$. Consequently, we can similarly make sense of the normal trace $\mathcal{N}\cdot \nabla T$ as an element of $W^{-\frac{1}{p},p}(\partial\Omega)$, the topological dual space of $W^{1-\frac{1}{p},p}(\partial\Omega)$, for any $p>3$.
        
        This allows us to consider the corresponding Cauchy-data of the solutions associated with (\ref{S2E1})
		\begin{gather}
			\label{S2E2}
			\mathcal{C}:=\{(c|_{\partial\Omega},T|_{\partial\Omega},\mathcal{N}\cdot D_1(c,T,x)\nabla c_1,\dots,\mathcal{N}\cdot (D_M(c,T,x)\nabla c_M),\mathcal{N}\cdot \nabla T)\mid (c,T)\text{ solves (\ref{S2E1})}\},
		\end{gather}
		with $\mathcal{C}\subset \bigl(W^{\frac{1}{2},2}(\partial\Omega)\bigr)^{M+1}\times \bigl(W^{-\frac{1}{2},2}(\partial\Omega)\bigr)^M\times W^{-\frac{1}{p},p}(\partial\Omega)$ for any $p>3$.
        
        The collection of Cauchy-data depends on $D_i,g_i,\phi,q$ and $\epsilon$. Whenever of relevance, we indicate such a dependence by writing, for example, $\mathcal{C}[\phi]$. It is not difficult to see that if $D_i\in C^{0,1}(\overline{\Omega})$ and $g_i\in C^0(\overline{\Omega})$ depend on position alone, then there exists a unique solution of (\ref{S2E1}) for any prescribed Dirichlet boundary conditions. Therefore, in this case, we may view the Cauchy-data as a Dirichlet-to-Neumann map $$\Lambda_{\operatorname{DN}}\colon\bigl(W^{\frac{1}{2},2}(\partial\Omega)\bigr)^{M+1}\rightarrow \bigl(W^{-\frac{1}{2},2}(\partial\Omega)\bigr)^M\times W^{-\frac{1}{p},p}(\partial\Omega).$$

        The following result shows that boundary measurements alone cannot reconstruct the electric potential.
		\begin{thm}[Necessity of interior measurements]
			\label{S2T1}
			Let $\Omega\subset\mathbb{R}^3$ be a bounded domain with connected $C^1$-boundary, let $M\in \mathbb{N}$ and for $1\leq i\leq M$, $g_i\in C^0(\overline{\Omega})$, $D_i\in C^{0,1}(\overline{\Omega})$, $\epsilon\in C^{0,1}(\overline{\Omega})$, $\phi^1,\phi^2\in \dot{C}^{1,1}_b(\mathbb{R}^{M+1}\times \overline{\Omega})$ and $q\in \mathbb{R}^M$. Further, suppose that $(\epsilon,D_i,\phi^1)$ and $(\epsilon,D_i,\phi^2)$ satisfy the ellipticity condition (\ref{eq:ellipt}). Then the following two sets of properties are equivalent:
			\begin{enumerate}
				\item
				\begin{enumerate}
					\item For all $(\gamma,\tau)\in \left(W^{\frac{1}{2},2}(\partial\Omega)\right)^{M+1}$ we have $$\phi^1(\gamma(x),\tau(x),x)-\phi^1(\gamma(y),\tau(y),y)=\phi^2(\gamma(x),\tau(x),x)-\phi^2(\gamma(y),\tau(y),y)$$
                    for $\mathcal{H}^2$-a.e.\ $x,y\in \partial\Omega$;
					\item $\Lambda_{\operatorname{DN}}[\epsilon,D_i,g_i,\phi^1]=\Lambda_{\operatorname{DN}}[\epsilon,D_i,g_i,\phi^2]$.
				\end{enumerate}
				\item \begin{enumerate}
					\item $\nabla_x(\phi^1(p,s,x)-\phi^2(p,s,x))=0$ for all $(p,s,x)\in \mathbb{R}^{M+1}\times \partial\Omega$ where $\nabla_x$ denotes the full Euclidean gradient with respect to $x$;
					\item there is some $r\in \mathbb{R}$ such that for all $(p,s,x)\in \mathbb{R}^{M+1}\times \partial\Omega$ we have $\phi^1(p,s,x)-\phi^2(p,s,x)=r$.
				\end{enumerate}
			\end{enumerate}
		\end{thm}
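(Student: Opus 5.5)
The plan is to exploit the fact that, under the hypotheses of \Cref{S2T1}, $D_i$ and $g_i$ depend on $x$ alone, so the system decouples. For prescribed $(\gamma,\tau)$, each $c_i$ solves the \emph{linear} problem $-\operatorname{div}(D_i(x)\nabla c_i)=g_i$, $c_i|_{\partial\Omega}=\gamma_i$. Hence $c$ and the Neumann data $\mathcal{N}\cdot(D_i\nabla c_i)$ do not depend on $\phi$ at all. Setting $u^j:=\phi^j(c,T^j,x)$, where $T^j$ is the temperature associated with $\phi^j$, the last equation becomes the linear problem
\[
\operatorname{div}(\epsilon\nabla u^j)=q\cdot c,\qquad u^j|_{\partial\Omega}=\phi^j(\gamma,\tau,x).
\]
We recover $T^j$ from $u^j$ by inverting $s\mapsto\phi^j(c,s,x)$, which is possible since $\partial_s\phi^j\ge\lambda$. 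Applying the chain rule on $\partial\Omega$, where $T^j=\tau$, gives
\begin{gather}
\nonumber
\mathcal{N}\cdot\nabla u^j=\partial_p\phi^j(\gamma,\tau,x)\,\mathcal{N}\cdot\nabla c+\partial_s\phi^j(\gamma,\tau,x)\,\mathcal{N}\cdot\nabla T^j+\mathcal{N}\cdot\nabla_x\phi^j(\gamma,\tau,x).
\end{gather}
Both directions of the equivalence will be read off from this identity.

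For $2\Rightarrow 1$, set $\psi:=\phi^1-\phi^2$. Part (a) is immediate, since $\psi=r$ on $\mathbb{R}^{M+1}\times\partial\Omega$. For (b), the boundary values of $u^1$ and $u^2+r$ coincide and both solve the same linear equation, so $u^1=u^2+r$ and $\mathcal{N}\cdot\nabla u^1=\mathcal{N}\cdot\nabla u^2$. Because $\psi\equiv r$ on $\mathbb{R}^{M+1}\times\partial\Omega$, the derivatives $\partial_p\psi$, $\partial_s\psi$ and the tangential $x$-derivatives of $\psi$ vanish there. Assumption 2(a) kills the normal $x$-derivative as well. Subtracting the identity above for $j=1,2$ and dividing by $\partial_s\phi^1\ge\lambda$ then yields $\mathcal{N}\cdot\nabla T^1=\mathcal{N}\cdot\nabla T^2$, so the Dirichlet-to-Neumann maps agree.

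For $1\Rightarrow 2$, I would first apply 1(a) with constant data $(\gamma,\tau)\equiv(p,s)$. This shows that $\psi(p,s,\cdot)$ is a.e.\ constant on $\partial\Omega$, hence by continuity equal to some $r(p,s)$ everywhere on $\partial\Omega$. Next I apply 1(a) to smooth data with $(\gamma,\tau)(x_0)=(p,s)$ and $(\gamma,\tau)(y_0)=(p',s')$ at two distinct boundary points; using continuity to pass from a.e.\ to these points gives $r(p,s)=r(p',s')$. Thus $\psi\equiv r$ is constant on $\mathbb{R}^{M+1}\times\partial\Omega$, which is 2(b), and again its $p$-, $s$- and tangential $x$-derivatives vanish on the boundary. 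As before, this gives $u^1=u^2+r$ and equal normal derivatives of $u^j$. By 1(b), also $\mathcal{N}\cdot\nabla T^1=\mathcal{N}\cdot\nabla T^2$. Subtracting the two chain-rule identities leaves $\mathcal{N}\cdot\nabla_x\psi(\gamma(x),\tau(x),x)=0$ on $\partial\Omega$. Taking constant data $(\gamma,\tau)\equiv(p,s)$ and using continuity of $\nabla_x\psi$ (from $\phi^j\in\dot C^{1,1}_b$), this gives 2(a) for all $(p,s,x)$.

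The main obstacle I expect is justifying the boundary chain rule for normal traces, which a priori live only in $W^{-\frac{1}{2},2}$ or $W^{-\frac1p,p}$. I would handle this by noting that the identity only needs to be tested with smooth (or constant) boundary data. For such data, elliptic regularity for the linear problems with Lipschitz $D_i,\epsilon$ makes $c$ and $u^j$ of class $W^{2,2}$ near the boundary, or at least gives their normal traces as $L^2$ functions. Since $\phi^j\in\dot C^{1,1}_b$, the inverse function theorem in $s$ then transfers this regularity to $T^j$, so the formula holds pointwise a.e. For general $W^{\frac12,2}$ data in $2\Rightarrow1$, I would instead argue weakly: $T^1$ and $T^2$ have the same trace, and the difference of Neumann data is expressed through $u^1-u^2$, which is constant.
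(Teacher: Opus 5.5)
Your proof is correct, and its core is the paper's argument. Since $D_i,g_i$ depend on $x$ alone, $c$ does not depend on $\phi$. The functions $\phi^j(c,T^j,x)$ then solve the same linear Dirichlet problem and hence differ by the constant $r$ (the paper normalises $r=0$). Finally, the chain rule on $\partial\Omega$ converts $\mathcal{N}\cdot\nabla T^1=\mathcal{N}\cdot\nabla T^2$ into $\mathcal{N}\cdot\nabla_x(\phi^1-\phi^2)=0$.

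You genuinely differ on the equivalence of (i,a) and (ii,b), which the paper isolates as \Cref{S3L7}. There the voltage identity is differentiated tangentially for $C^1$ boundary data. Constant data give $\nabla^{\partial\Omega}_x\phi^1=\nabla^{\partial\Omega}_x\phi^2$. Data with a nonvanishing tangential gradient in a single component give $\partial_{z_k}\phi^1=\partial_{z_k}\phi^2$ for $z=(p,s)$. The resulting vanishing gradient of $\phi^1-\phi^2$ on $\mathbb{R}^{M+1}\times\partial\Omega$ is then integrated using that $\partial\Omega$ is connected.

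Your pointwise argument uses constant data to get $\psi(p,s,\cdot)\equiv r(p,s)$, then data taking two prescribed values at two points to get $r(p,s)=r(p',s')$, upgrading a.e.\ to everywhere by continuity. It is more elementary and needs only continuity of $\phi^j$ for this step. It also never uses connectedness, because (i,a) already compares points lying on different components of $\partial\Omega$. Since this is the only place the paper's proof uses that hypothesis, your route shows it can be dropped.

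The paper's differential argument is local and directly supplies the boundary identities for $\partial_p\phi^j$, $\partial_s\phi^j$ and $\nabla^{\partial\Omega}_x\phi^j$ used in the chain-rule step. You obtain these, equally validly, by differentiating $\psi\equiv r$.

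Your caution about the chain rule for normal traces is justified; the paper performs this step formally as well. Note, however, that $W^{2,2}$ regularity up to a merely $C^1$ boundary is not available in general. That particular remedy would therefore need either a smoother boundary or the nontangential $L^2$-type estimates you mention as the alternative.
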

        The idea of the proof of \Cref{S2T1} consists of two steps. In the first step, one can differentiate the boundary identity along the boundary and show, by an appropriate choice of boundary conditions, that the corresponding gradient of $\phi^1-\phi^2$ must vanish, as a function on $\mathbb{R}^{M+1}\times \partial\Omega$, cf.\ \Cref{S3L7}. This establishes the equivalence between (i,a) and (ii,a). The remaining two bullet points are shown to be equivalent by exploiting the fact that $D_i$ and $g_i$ are functions of position alone and therefore the equations decouple. This enables the use of standard uniqueness results to establish a relationship between the normal traces of the temperature gradients and the normal traces of the gradients of the $\phi^j$, cf.\ \Cref{ProofBoundary}.
        \begin{rem}
			\label{S2R2}
			\begin{enumerate}
				\item \Cref{S2T1} tells us that boundary measurements alone are enough to reconstruct the values of the potential along $\partial\Omega$ (up to a constant) and in addition, we can reconstruct the full gradient of the potential along $\partial\Omega$. This means that while we cannot reconstruct the potential $\phi$ at any fixed interior point, the behaviour of $\phi$ as we approach the boundary is determined through the gradient of $\phi$ by the boundary measurements. In this sense boundary measurements can reconstruct the values of $\phi$ along the boundary and in an ``infinitesimal'' neighbourhood around it.
				\item If $\phi^1$ is a potential, $\psi\in C^{\infty}_c(\Omega)$ is a bump function and $\widetilde{\phi}\in \dot{C}^{1,1}_b(\mathbb{R}^{M+1}\times \overline{\Omega})$ is any other function, then $\phi^2:=\phi^1+\psi\cdot \widetilde{\phi}$ will satisfy conditions (ii,a) and (ii,b), from which we can conclude that their Dirichlet-to-Neumann maps coincide. This shows that, in general, no information about the behaviour of the potential at interior points can be obtained from boundary measurements alone.
			\end{enumerate}
		\end{rem}
		\subsubsection{Boundary and internal measurements}
		The goal of the present section is to show that when interior temperature measurements are taken into account, then the potential as well as the diffusion coefficients can be uniquely reconstructed, at least in the source-free setting.
		
		In the following we will consider the static source-free problem
		\begin{gather}
			\label{S2E3}
			\operatorname{div}(D_i(c,T,x)\nabla c_i)=0\text{ and }\operatorname{div}(\epsilon\nabla (\phi(c,T,x))=q\cdot c\text{ in }\Omega.
		\end{gather}
		The idea is to replace the temperature boundary measurements $\mathcal{N}\cdot \nabla T$ by interior temperature measurements. The boundary measurements of interest are therefore captured by the following (reduced) Cauchy-data set
		\begin{gather}
			\label{S2E4}
            \mathcal{C}_{\operatorname{red}}:=\{(c|_{\partial\Omega},T|_{\partial\Omega},\mathcal{N}\cdot D_1(c,T,x)\nabla c_1,\dots, \mathcal{N}\cdot D_M(c,T,x)\nabla c_M\mid (c,T)\text{ solves (\ref{S2E3}}\},
		\end{gather}
		which is a subset of $\left(W^{\frac{1}{2},2}(\partial\Omega)\right)^{M+1}\times \left(W^{-\frac{1}{2},2}(\partial\Omega)\right)^M$. To indicate the dependence of $\mathcal{C}_{\operatorname{red}}$ on the $D_i$ and $\phi$ we will also write $\mathcal{C}_{\operatorname{red}}[D,\phi]$ where we use here the notation $D=(D_1,\dots,D_M)$.
        
        The main result of this subsection is the following.
		\begin{thm}[Unique reconstruction for the inverse problem with boundary and internal data]
			\label{S2T3}
			Let $\Omega\subset\mathbb{R}^3$ be a bounded domain with a connected $C^{1,\beta}$-boundary for some $0<\beta\leq 1$. Let $M\in \mathbb{N}$ and for $1\leq i\leq M$ suppose $D^1_i,D^2_i\in C^{0,1}_b(\mathbb{R}^{M+1}\times \overline{\Omega})$, $\epsilon\in C^{0,\beta}(\overline{\Omega})$, $\phi^1,\phi^2\in \dot{C}^1_b(\mathbb{R}^{M+1}\times \overline{\Omega})$ and $q\in \mathbb{R}^M$. Further, suppose that $(\epsilon,D^1_i,\phi^1)$ and $(\epsilon,D^2_i,\phi^2)$ satisfy the ellipticity condition (\ref{eq:ellipt}). Then the following two sets of conditions are equivalent
			\begin{enumerate}
				\item \begin{enumerate}
					\item For all $(\gamma,\tau)\in \left(W^{\frac{1}{2},2}(\partial\Omega)\right)^{M+1}$ and $\mathcal{H}^2$-a.e. $x,y\in \partial\Omega$ we have $$\phi^1(\gamma(x),\tau(x),x)-\phi^1(\gamma(y),\tau(y),y)=\phi^2(\gamma(x),\tau(x),x)-\phi^2(\gamma(y),\tau(y),y);$$
					\item For all $(\gamma,\tau)\in \left( W^{\frac{1}{2},2}(\partial\Omega)\right)^{M+1}$ there is some $\alpha=\alpha(\gamma,\tau)\in \left(W^{-\frac{1}{2},2}(\partial\Omega)\right)^M$ with $(\gamma,\tau,\alpha)\in \mathcal{C}_{\operatorname{red}}[D^1,\phi^1]\cap\mathcal{C}_{\operatorname{red}}[D^2,\phi^2]$;
					\item For all $(\gamma,\tau)\in \mathbb{R}^M\times W^{\frac{1}{2},2}(\partial\Omega)\subset \left(W^{\frac{1}{2},2}(\partial\Omega)\right)^{M+1}$ we have $T^1(x)=T^2(x)$ for a.e.\ $x\in \Omega$ where $T^1$ and $T^2$ are the unique temperature profiles obtained from the Dirichlet boundary condition $(\gamma,\tau)$ and (\ref{S2E3}), corresponding to the coefficient functions $(D^1_i,\phi^1,\epsilon,q)$ and $(D^2_i,\phi^2,\epsilon,q)$ respectively, cf.\ \Cref{S3P5}.
				\end{enumerate}
				\item  \begin{enumerate}
					\item For all $(p,s,x)\in \mathbb{R}^{M+1}\times \overline{\Omega}$ and all $1\leq i\leq M$ we have $D^1_i(p,s,x)=D^2_i(p,s,x)$,
					\item There is some $r\in \mathbb{R}$ such that for all $(p,s,x)\in \mathbb{R}^{M+1}\times \overline{\Omega}$ we have $\phi^1(p,s,x)-\phi^2(p,s,x)=r$.
				\end{enumerate}
			\end{enumerate}
		\end{thm}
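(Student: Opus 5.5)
The direction (ii)$\Rightarrow$(i) should be direct. If $D^1_i=D^2_i$ and $\phi^1=\phi^2+r$, then the system (\ref{S2E3}) is literally the same for both coefficient sets, because $\nabla(\phi^1(c,T,x))=\nabla(\phi^2(c,T,x))$. Hence any solution $(c,T)$ for one set of coefficients solves the other. Condition (i,a) holds since the constant $r$ cancels in differences. Condition (i,b) follows by taking, via \Cref{S3C4}, any weak solution for the given Dirichlet data; it produces Cauchy data lying in both reduced sets. Condition (i,c) follows from the uniqueness in \Cref{S3P5}.

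For (i)$\Rightarrow$(ii) I would separate the roles of the measurements, recovering $\phi$ first.

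\emph{Recovering $\phi$.} Take constant concentrations $\gamma\in\mathbb{R}^M$ and an arbitrary $\tau$. By \Cref{S3P5}, $c^j\equiv\gamma$, so $T^j$ solves the scalar equation $\operatorname{div}(\epsilon\nabla\phi^j(\gamma,T^j,x))=q\cdot\gamma$. By (i,c), $T^1=T^2=:T$. Setting $u^j:=\phi^j(\gamma,T,x)$, both satisfy $\operatorname{div}(\epsilon\nabla u^j)=q\cdot\gamma$. By (i,a), their boundary traces differ by a constant, and connectedness of $\partial\Omega$ is used here. Linear uniqueness then gives $u^1-u^2\equiv r(\gamma,\tau)$ in $\Omega$. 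Next I would realise every point $(s,x)$ as $T(x)=s$ for suitable $\tau$: varying $\tau$ over constants plus small perturbations, the temperature attains every value at every interior point. This yields $\phi^1(\gamma,s,x)-\phi^2(\gamma,s,x)=r(\gamma,s,x)$, with $r$ locally constant along the family of temperature profiles. Differentiating in $x$ and in $s$ through the solution map, and using the boundary result \Cref{S3L7}, I expect to show that $r$ is independent of $x$ and $s$. It remains to show independence from $\gamma$. For that I would compare constant boundary data $\gamma,\gamma'$ using (i,a) with mixed nonconstant $\gamma$ on $\partial\Omega$: by \Cref{S2T1}-type arguments $\phi^1-\phi^2$ equals a single constant on $\mathbb{R}^{M+1}\times\partial\Omega$, and this ties all the $r(\gamma)$ together. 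This gives (ii,b).

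\emph{Recovering $D$.} Now that $\phi^1=\phi^2+r$, the $T$-equation is identical for both coefficient sets. The remaining information is the reduced Cauchy data (i,b) of the coupled quasi-linear system. Here I would use the linearisation lemma \Cref{S3L8}, the generalisation of Sun's result, at the background solutions $c\equiv\gamma$, $T=T_\tau$. Linearising the $c_i$-equations at a constant $c$ kills the terms involving derivatives of $D_i$ with respect to $(p,s)$, since $\nabla\gamma=0$. What remains is $\operatorname{div}(D^j_i(\gamma,T_\tau(x),x)\nabla v_i)=0$. The reduced Cauchy data then determine the linear Dirichlet-to-Neumann maps of the conductivities $x\mapsto D^j_i(\gamma,T_\tau(x),x)$. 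These conductivities are Lipschitz, so uniqueness for the linear Calderón problem in $\mathbb{R}^3$ with Lipschitz conductivities (Haberman--Tataru, Caro--Rogers) gives $D^1_i(\gamma,T_\tau(x),x)=D^2_i(\gamma,T_\tau(x),x)$. This is where the interior measurement is essential. The coefficient is not frozen, because $T_\tau$ depends on $x$, but $T_\tau$ is common to both problems by (i,c). So the same surjectivity argument as before, hitting every $(s,x)$, yields (ii,a).

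\emph{Main obstacle.} I expect the hardest step to be justifying the linearisation. This means showing that (i,b) implies equality of the linearised Cauchy data, even though the nonlinear problem may lack uniqueness for nonconstant boundary data. It requires a differentiability or stability estimate for solutions near the constant-concentration solutions, which is what \Cref{S3L8} should provide. A second delicate point is the surjectivity claim, namely that every $(s,x)\in\mathbb{R}\times\Omega$ is attained by some $T_\tau$. I would handle it by taking constant $\tau=s_0$ and continuity in $s_0$, noting that $T_{s_0}$ is not constant in general because of the source $q\cdot\gamma$.
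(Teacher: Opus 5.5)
Your skeleton is viable. It is essentially the paper's argument run in the opposite order. You obtain $\phi$ from (i,a) and (i,c) and then $D$ from (i,b). The paper first linearises to identify the composites $D^j_i(\mu,h^j(\mu,s,y),y)$, which needs $\phi^1=\phi^2$ only on $\partial\Omega$, and only then uses (i,c) to get $h^1=h^2$.

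Your $\phi$ step is correct but can be shortened:
\begin{itemize}
\item Apply \Cref{S3L7} first, giving $\phi^1-\phi^2=r$ on $\mathbb{R}^{M+1}\times\partial\Omega$ with a single $r$. Then your boundary constant $r(\gamma,\tau)$ equals $r$ for every $(\gamma,\tau)$, and the ``differentiating through the solution map'' step is unnecessary.
\item To hit every $(s,x)$ with constant $\tau=s_0$, continuity alone is not enough. You also need, via the weak maximum principle and $\lambda\le\partial_s\phi\le C$, that $s_0\mapsto T_{s_0}(x)$ is continuous and unbounded in both directions; then apply the intermediate value theorem.
\end{itemize}
The paper avoids this by parametrising with the value of $\sigma=\phi(c,T,x)$, which solves a linear equation. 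Shifting its boundary value $\eta_0$ by a constant attains any prescribed $s$ at any $y$ exactly.

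The genuine gap is the step you flag as the main obstacle: you invoke \Cref{S3L8} without putting the system into its form. The lemma concerns $\operatorname{div}(\nu_i(A(F^t)(x),x)\nabla F^t_i)=0$ with a fixed operator $A$ acting on the concentrations alone. In your system $T$ is still an unknown, coupled through a second PDE.

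The missing idea is to eliminate $T$. With $h$ the inverse from \Cref{S3L3}, $\sigma=\phi(c,T,x)$ solves $\operatorname{div}(\epsilon\nabla\sigma)=q\cdot c$ with $\sigma|_{\partial\Omega}=\eta_0:=\phi(\gamma,\tau,\cdot)$. Hence $T=h(c,L^{-1}_\epsilon(c\cdot q),x)$, and the concentrations satisfy $\operatorname{div}(\nu_i(A(c),x)\nabla c_i)=0$. Here $\nu_i(p,s,x)=D_i(p,h(p,s,x),x)$, and $A(v)=(v,L^{-1}_\epsilon(v\cdot q))$ is globally Lipschitz.

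Moreover, $A$ depends on $\eta_0$. To keep it independent of $t$ and common to both coefficient sets, you must hold $\eta_0$ fixed while perturbing $\gamma=\mu+tf$, i.e.\ use $\tau_t=h(\mu+tf,\eta_0,\cdot)$. This is admissible because (i,b) holds for all Dirichlet data and $h^1=h^2$ on $\partial\Omega$. If you hold $\tau$ fixed, as you propose, the boundary value $\phi(\mu+tf,\tau,\cdot)$ of $\sigma$ moves with $t$, and \Cref{S3L8} does not apply as stated.

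Finally, take $\eta_0$ constant. Then $L^{-1}_\epsilon(\mu\cdot q)\in C^{1,\beta}(\overline\Omega)$, so $x\mapsto D_i(\mu,T(x),x)$ is Lipschitz and the Calder\'{o}n uniqueness theorem applies. Surjectivity in the $D$ step also becomes immediate. With constant $\tau=s_0$, by contrast, the datum $\phi(\gamma,s_0,\cdot)$ is only $C^1$ on $\partial\Omega$, so your assertion that these conductivities are Lipschitz is not justified.
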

        The key steps to establish \Cref{S2T3} are the following; see \Cref{InteriorMeasure} for the full proof.
        \begin{itemize}
            \item In the first step we show that we can express the temperature $T$ as a function of the concentrations $c$, (\ref{S3E30}). This dependence is non-local and non-linear.
            \item In the second step we generalise a linearisation technique of Sun, cf.\ \cite{Sun96}, to the setting of quasi-linear elliptic systems which may involve a non-local, non-linear dependence. In the original context of \cite{Sun96}, which considered a quasi-linear, scalar Calder\'{o}n problem, the linearisation corresponds to freezing the non-linearity and reduces the problem to the standard linear Calder\'{o}n problem. Due to the non-local dependence, our situation turns out to be more complicated. The non-linearity turns out to be a function of position after the linearisation procedure.
            \item We use the interior measurements to freeze the non-linearity and recover the potential and diffusion coefficients.
        \end{itemize}
		\begin{rem}
			\label{S2R4}
			It is well-known that the electrostatic (scalar) potential $\phi$ is determined only up to constants, so that \Cref{S2T3} is the best possible result in the sense that it shows that the diffusion coefficients are uniquely determined by our hybrid measurements and that the potential can be recovered up to a constant.
		\end{rem}

\subsection{Comparison to other inverse problems in the mathematical literature}

The problem of reconstructing the diffusion coefficients $D_i(c, T, x)$ and the electric potential $\phi(c, T, x)$ from boundary measurements and internal temperature data is at the intersection of three very active fields: the classical Calder\'on problem, inverse problems for quasilinear elliptic equations, and hybrid (coupled-physics) inverse problems.

\paragraph{The Classical Calder\'on Problem.} The linear prototype for the electrostatic sub-problem is Calder\'on's problem \cite{calderon1980inverse}, which aims to recover a spatially varying conductivity from the Dirichlet-to-Neumann (DN) map. Global uniqueness in dimensions $n \ge 3$ was established by Sylvester and Uhlmann using Complex Geometric Optics (CGO) solutions \cite{sylvester1987global}. Subsequent research relaxed regularity assumptions \cite{nachman1996global, astala2006calderon, haberman2015uniqueness} and addressed partial boundary data \cite{kenig2007calderon}. However, boundary-only measurements for elliptic equations inherently suffer from logarithmic stability \cite{alessandrini1988stable}, necessitating internal data or strong prior assumptions for practical reconstructions.

\paragraph{Quasilinear and Nonlinear Elliptic PDEs.} Because $D_i$ and $\phi$ depend on the state variables $c$ and $T$, the system governing the electrolyser is inherently quasilinear. The foundational uniqueness result for recovering solution-dependent coefficients from the DN map was established by Sun \cite{Sun96}, who introduced the method of linearisation at constant boundary data. This was later extended to anisotropic and gradient-dependent media \cite{sun1997inverse, hervas2002inverse}. Recently, the paradigm of higher-order linearisation has demonstrated that nonlinearities can actually facilitate uniqueness. C\^arstea et al.\ \cite{carstea2019reconstruction} developed constructive methods for recovering quasilinear coefficients by exploiting the nonlinear interactions of CGO solutions, and these results have recently been expanded to partial data settings \cite{kian2023quasilinear}.

\paragraph{Hybrid Inverse Problems and Coupled Systems.} To overcome the severe ill-posedness of boundary measurements, hybrid inverse problems utilise internal functionals (e.g., power or current densities). Seminal works by Bal and Uhlmann \cite{bal2010inverse} demonstrated that internal data transforms the elliptic inverse problem into a well-posed transport problem, yielding Lipschitz stability. In our formulation, the internal temperature $T(x)$ naturally serves as this internal data. Knowing the solution to one of the coupled equations everywhere inside the domain provides powerful constraints, conceptually linking our setup to inverse source and heat conduction problems with internal sensors \cite{bal2013hybrid,montalto2013stability,kuchment2012stabilizing}. 

Finally, the primary reaction-diffusion-Poisson model considered here is very close to Poisson-Nernst-Planck (PNP) systems, a connection that becomes explicit when electric forces are incorporated into the particle flux (as discussed in Appendix B). Inverse problems in the coupled electrochemical context frequently aim to recover specific diffusion parameters or doping profiles from boundary data \cite{burger2001identification,burger2007inverse, burger2011inverse}. Our work synthesises these analytical domains: we leverage the analytical tools of nonlinear elliptic theory (via linearisation) while exploiting the structural constraints provided by hybrid internal measurements to overcome the fundamental non-uniqueness of the boundary-only problem, offering a robust framework for identifying solution-dependent parameters in coupled systems.
        
		\section{Proofs}
        \label{Section4}
		\subsection{The forward problem}
		\label{Forward}
		\subsubsection{Existence of weak solutions}
		\label{Existence}
		We start by examining a simplified system of equations. Once this is done we will prove that our original system of equations can be reduced to this simplified system.
		\begin{lem}[Existence of weak solutions I]
			\label{S3L1}
			Let $\Omega\subset\mathbb{R}^3$ be a bounded domain with $C^1$-boundary (not necessarily connected). Let $N\in \mathbb{N}$, $\nu_i\in C^0_b(\mathbb{R}^{N}\times \overline{\Omega})$ for $1\leq i\leq N$, $G_i\in C^0_b(\mathbb{R}^{N}\times \overline{\Omega})$ for $1\leq i\leq N-1$ and $G_N\in C^0(\mathbb{R}^{N}\times \overline{\Omega})$. Suppose further that there are $0<\nu_*,C_1,C_2$ such that $\nu_*\leq \nu_i$ for all $1\leq i \leq N$ and $|G_N(p,x)|\leq C_1+C_2|\hat{p}|$ for all $(p,x)\in \mathbb{R}^N\times \overline{\Omega}$ where $p=(\hat{p},p_N)\in \mathbb{R}^{N-1}\times \mathbb{R}$. Then for every $\gamma\in \bigl(W^{\frac{1}{2},2}(\partial\Omega)\bigr)^{N}$ there exists some $\eta\in \left(H^1(\Omega)\right)^{N}$, $\eta=(\eta_1,\dots,\eta_N)$, that weakly solves for every $i=1,\dots,N$
			\begin{gather}
				\label{S3E1}
				-\operatorname{div}(\nu_i(\eta,x)\nabla \eta_i(x))=G_i(\eta(x),x)\text{ in }\Omega, \qquad\eta|_{\partial\Omega}=\gamma.
			\end{gather}
		\end{lem}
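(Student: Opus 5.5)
The plan is a Schaefer fixed point argument after lifting the boundary data. First I fix extensions $\tilde\gamma_i\in H^1(\Omega)$ of $\gamma_i$ (bounded right inverse of the trace on the $C^1$ domain) and look for $\eta=\tilde\gamma+u$ with $u\in (H^1_0(\Omega))^N$.

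\emph{Solution map.} On $X:=(L^2(\Omega))^N$ I define $\mathcal{T}$ as follows. Given $w\in X$, let $u=\mathcal{T}(w)\in (H^1_0(\Omega))^N$ be the unique weak solution of the \emph{decoupled linear} problems
\[
-\operatorname{div}\bigl(\nu_i(\tilde\gamma+w,x)\nabla u_i\bigr)=G_i(\tilde\gamma+w,x)+\operatorname{div}\bigl(\nu_i(\tilde\gamma+w,x)\nabla\tilde\gamma_i\bigr),\qquad 1\leq i\leq N .
\]
The coefficient $\nu_i(\tilde\gamma+w,x)$ is measurable, bounded by $\|\nu_i\|_\infty$ and bounded below by $\nu_*$. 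The right-hand side lies in $H^{-1}$: $G_i$ is bounded for $i<N$, and $|G_N(\tilde\gamma+w,x)|\leq C_1+C_2|\hat{\tilde\gamma}+\hat w|\in L^2$. Lax--Milgram therefore gives existence, uniqueness and the estimate $\|u\|_{H^1}\leq C(1+\|\hat w\|_{L^2})$. A fixed point $u=\mathcal{T}(u)$ yields the desired $\eta=\tilde\gamma+u$.

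\emph{Compactness and continuity.} $\mathcal{T}$ maps bounded sets of $X$ into bounded sets of $H^1_0$, which embed compactly in $X$ by Rellich. For continuity, take $w_k\to w$ in $L^2$. Along any subsequence I can pass to a further subsequence converging a.e., so by continuity of $\nu_i,G_i$ the coefficients converge a.e. with uniform bounds. The $G_N$ terms converge in $L^2$ by the linear-growth bound and generalized dominated convergence. The $u_k=\mathcal{T}(w_k)$ are bounded in $H^1_0$, so a subsequence converges weakly to some $u$. Testing the equation with $\varphi\in H^1_0$ and using $\nu_i(\cdot)\nabla\varphi\to\nu_i(\cdot)\nabla\varphi$ strongly in $L^2$ (dominated convergence) shows that $u=\mathcal{T}(w)$. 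Uniqueness of the limit then gives convergence of the full sequence in $X$.

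\emph{A priori bound (main step).} I must show that the set $\{u:u=\sigma\mathcal{T}(u),\ \sigma\in[0,1]\}$ is bounded in $X$; this is the step needing the special structure. The first $N-1$ components are easy: since $G_i$ is bounded for $i<N$, testing with $u_i$ gives $\|u_i\|_{H^1}\leq C$ uniformly in $\sigma$, because $\sigma\leq1$ only shrinks the equation $-\operatorname{div}(\nu_i\nabla u_i)=\sigma(\dots)$. The $N$-th component is then controlled: $G_N$ grows only in $\hat p$, so $\|G_N(\tilde\gamma+u,x)\|_{L^2}\leq C(1+\|\hat u\|_{L^2})\leq C'$, and testing the $N$-th equation gives $\|u_N\|_{H^1}\leq C''$. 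This triangular structure is exactly why the growth hypothesis involves only $\hat p$. Schaefer's theorem then provides a fixed point, completing the proof.
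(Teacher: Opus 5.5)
Your proof is correct and follows essentially the same route as the paper. Both apply Schaefer's fixed point theorem on $(L^2(\Omega))^N$ to the decoupled linear solution map, prove continuity via a.e.-convergent subsequences and (generalised) dominated convergence, get compactness from the $H^1$ bound and Rellich, and obtain the a priori bound from the triangular structure: first the $N-1$ components with bounded $G_i$, then the $N$-th component using that $G_N$ grows only in $\hat{p}$. The differences are cosmetic. You lift the boundary data to work in $H^1_0(\Omega)$ and identify the limit in the continuity step through weak $H^1$ compactness, whereas the paper keeps the data $\gamma$ in the solution map and gets strong convergence of the gradients directly from an energy estimate.
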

		\begin{proof}
        Let $\gamma=(\gamma_1,\dots,\gamma_N)\in \bigl(W^{\frac{1}{2},2}(\partial\Omega)\bigr)^N$. We split the proof into several steps.
        
			\textit{Step 1: The solution to \eqref{S3E1} as a fixed point of a suitable operator $A$.}  By standard linear elliptic  theory, for every $v\in \left(L^2(\Omega)\right)^N$ there exist unique weak solutions $w_i\in H^1(\Omega)$, $1\leq i\leq N$, of the boundary value problem
			\begin{gather}
				\label{S3E2}
				-\operatorname{div}(\nu_i(v(x),x)\nabla w_i(x))=G_i(v(x),x)\text{ in }\Omega, \qquad w_i|_{\partial\Omega}=\gamma_i.
			\end{gather}
			We can therefore consider the solution operator
			\begin{gather}
				\label{S3E3}
				A:\left(\left(L^2(\Omega)\right)^N,\|\cdot\|_{L^2(\Omega)}\right)\rightarrow \left(\left(H^1(\Omega)\right)^N,\|\cdot\|_{L^2(\Omega)}\right),
			\end{gather}
			which maps $v\in \left(L^2(\Omega)\right)^N$ to the unique solution $w=(w_1,\dots,w_N)$ of (\ref{S3E2}). We observe that solutions of (\ref{S3E1}) correspond to fix points of $A$. We verify now that the operator $A$ satisfies the conditions of Schaefer's fixed point theorem, cf.\ \cite[Chapter 9.2.2 Theorem 4]{Evans10}: we have to show that $A$ is continuous, compact and that the set $\left\{v\in \left(L^2(\Omega)\right)^N\mid v=\lambda A(v)\text{ for some }0\leq\lambda\leq 1\right\}$ is bounded.
			
			\textit{Step 2: $A$ is continuous.} Let $v,\widetilde{v}\in \left(L^2(\Omega)\right)^N$ with associated solutions $w=A(v),\widetilde{w}=A(\widetilde{v})$. Let us set $\overline{w}:=w-\widetilde{w}$ (and denote the components by $w_i,\widetilde{w}_i$, and $\overline{w}_i$, respectively). By (\ref{S3E2}), we have
			\begin{equation*}
				\begin{split}
                \int_{\Omega}&\nu_i(v,x)\nabla \overline{w}_i\cdot \nabla \overline{w}_idx=\int_{\Omega}\nu_i(v,x)\nabla w_i\cdot \nabla \overline{w}_idx-\int_{\Omega}\nu_i(v,x)\nabla \widetilde{w}_i\cdot \nabla \overline{w}_idx
				\\
				&=\int_{\Omega}\nu_i(v,x)\nabla w_i\cdot \nabla \overline{w}_idx-\int_{\Omega}\nu_i(\widetilde{v},x)\nabla \widetilde{w}_i\cdot \nabla \overline{w}_idx+\int_{\Omega}\left(\nu_i(\widetilde{v},x)-\nu_i(v,x)\right)\nabla \widetilde{w}_i\cdot \nabla \overline{w}_idx
				\\
				&=\int_{\Omega}(G_i(v,x)-G_i(\widetilde{v},x))\overline{w}_idx+\int_{\Omega}(\nu_i(\widetilde{v},x)-\nu_i(v,x))\nabla \widetilde{w}_i\cdot \nabla \overline{w}_idx,
                \end{split}
			\end{equation*}
			where we used that $\overline{w}_i\in H^1_0(\Omega)$. Using that $\nu_*\leq \nu_i$ by assumption, the Cauchy-Schwarz inequality and Poincar\'{e}'s inequality, we find
			\begin{gather}
				\label{S3E4}
				\nu_*\|\nabla \overline{w}_i\|_{L^2(\Omega)}\leq \|(\nu_i(\widetilde{v},\cdot)-\nu_i(v,\cdot))\nabla \widetilde{w}_i\|_{L^2(\Omega)}+\frac{\|G_i(v,\cdot)-G_i(\widetilde{v},\cdot)\|_{L^2(\Omega)}}{\sqrt{\lambda_D(\Omega)}},
			\end{gather}
			where $\lambda_D(\Omega)$ denotes the first Dirichlet-eigenvalue of $\Omega$. 
            
            Let $(v_k)_k\subset \left(L^2(\Omega)\right)^N$ be a sequence converging to $\widetilde{v}$ in $L^2(\Omega)$. We can then pick $v=v_k$ in (\ref{S3E4}) and obtain
			\begin{gather}
				\label{S3E5}
				\nu_*\|\nabla \widetilde{w}_i-\nabla w^k_i\|_{L^2(\Omega)}\leq \|(\nu_i(\widetilde{v},\cdot)-\nu_i(v_k,\cdot))\nabla \widetilde{w}_i\|_{L^2(\Omega)}+\frac{\|G_i(v_k,\cdot)-G_i(\widetilde{v},\cdot)\|_{L^2(\Omega)}}{\sqrt{\lambda_D(\Omega)}}.
			\end{gather}
			We will now show that every subsequence of $(v_k)_k$ has yet another subsequence such that its image under $A$ converges to $A(\tilde{v})$. Since the limit is independent of the chosen subsequence, this will prove the convergence of the full sequence $(A(v_k))_k$ to $A(\tilde{v})$. To this end, observe that every subsequence of $(v_k)_k$ has yet another subsequence (denoted in the same way) for which $v_k(x)\rightarrow \widetilde{v}(x)$ pointwise for a.e. $x\in \Omega$ so that the continuity of $\nu_i$ and its global boundedness imply that $\|(\nu_i(\widetilde{v},\cdot)-\nu_i(v_k,\cdot))\nabla \widetilde{w}_i\|_{L^2(\Omega)}\rightarrow 0$ as $k\rightarrow\infty$ by means of the dominated convergence theorem. Further, by means of the continuity of the $G_i$ we also find $G_i(v_k(x),x)-G_i(\widetilde{v}(x),x)\rightarrow 0$ for a.e. $x\in \Omega$ and in addition, by assumption on $G_i$, $|G_i(v_k(x),x)-G_i(\widetilde{v}(x),x)|\leq 2C_1+C_2(|v_k(x)|+|\widetilde{v}(x)|)$. The latter, converges pointwise a.e.\ and in $L^2(\Omega)$ to $2C_1+2C_2|\widetilde{v}(x)|$ so that the generalised dominated convergence theorem, \cite[Chapter 2.3 Exercise 20]{Folland99}, implies $G_i(v_k(x),x)-G_i(\widetilde{v}(x),x)\rightarrow 0$ in $L^2(\Omega)$ and consequently (\ref{S3E5}) implies
			\begin{gather}
				\nonumber
				\|\nabla \widetilde{w}_i-\nabla w^k_i\|_{L^2(\Omega)}\rightarrow 0\text{ as }k\rightarrow 0\text{ for all }1\leq i\leq N.
			\end{gather}
			Then the Poincar\'{e} inequality yields $\|A(v_k)-A(\widetilde{v})\|_{L^2(\Omega)}=\|\widetilde{w}-w_k\|_{L^2(\Omega)}\rightarrow 0$, which implies the continuity of $A$.
			
			\textit{Step 3: $A$ is compact.} Let $v\in \left(L^2(\Omega)\right)^N$, $w:=A(v)$, and $w^0:=A(0)$. By \eqref{S3E4}, we have
			\begin{gather}
				\nonumber
				\nu_*\|\nabla w_i-\nabla w^0_i\|_{L^2(\Omega)}\leq C(\nu)\|\nabla w^0_i\|_{L^2(\Omega)}+\widetilde{C}(\Omega,G)(1+\|v\|_{L^2(\Omega)})
			\end{gather}
			for suitable constants $C(\nu)$ and $\widetilde{C}(\Omega,G)$ that are independent of $v$. Here we used the bound on the $G_i$ and that the $\nu_i$ are globally bounded. From this we deduce by means of Poincar\'{e}'s inequality
			\begin{gather}
				\label{S3E6}
				\|w-w^0\|_{H^1(\Omega)}\leq c(\Omega,\nu,G,\gamma)\left(1+\|v\|_{L^2(\Omega)}\right)
			\end{gather}
			and in turn $\|w\|_{H^1(\Omega)}\leq \|w^0\|_{H^1(\Omega)}+\|w-w^0\|_{H^1(\Omega)}$. This proves that if $(v_k)_k\subset \left(L^2(\Omega)\right)^N$ is $L^2(\Omega)$-bounded, then $(A(v_k))_k$ is bounded in $H^1(\Omega)$ and thus admits a subsequence that converges strongly in $L^2(\Omega)$. This establishes the compactness of $A$.

            \textit{Step 4: the set $\left\{v\in \left(L^2(\Omega)\right)^N\mid v=\lambda A(v)\text{ for some }0\leq\lambda \leq 1\right\}$ is $L^2(\Omega)$-bounded.} Assume that $v=\lambda A(v)$ for some $0\leq \lambda\leq 1$. Observe that for $1\leq i\leq N-1$ the $G_i$ are globally bounded, so that (\ref{S3E4}) immediately implies that $\|\nabla w_i-\nabla w^0_i\|_{L^2(\Omega)}\leq C(\nu,G,\Omega,\gamma)$, where $w=A(v)$. So, by Poincar\'{e}'s inequality, we deduce that $\|w_i\|_{H^1(\Omega)}$ is uniformly bounded for $1\leq i\leq N-1$ (independently of the $L^2(\Omega)$-norm of $v$). Finally, for $i=N$, we can deduce from (\ref{S3E4}) that
			\begin{gather}
				\label{S3E7}
				\nu_*\|\nabla w_N-\nabla w^0_N\|_{L^2(\Omega)}\leq C(\nu,\gamma,\Omega)+C_1(\Omega,G,\gamma)\left(1+\|\hat{v}\|_{L^2(\Omega)}\right)
			\end{gather}
			where $v=(\hat{v},v_N)\in \left(L^2(\Omega)\right)^{N-1}\times L^2(\Omega)$. Since $\hat{v}=\lambda \hat{w}$, where $w=(\hat w,w_N)$, we have $\|\hat{v}\|_{L^2(\Omega)}\leq \|\hat{w}\|_{L^2(\Omega)}\leq \sqrt{N-1}\max_{1\leq i\leq N-1}\|w_i\|_{L^2(\Omega)}$ and the latter is uniformly bounded. We deduce from (\ref{S3E7}) and Poincar\'{e}'s inequality that $\|w_N\|_{H^1(\Omega)}$ is also uniformly bounded. As a consequence, the set $$ \left\{v\in \left(L^2(\Omega)\right)^N\mid v=\lambda A(v)\text{ for some }0\leq \lambda\leq 1\right\}$$ is bounded. 
            
            \textit{Step 5: \eqref{S3E1} has a weak solution.}
            The existence of a weak solution to \eqref{S3E1} now follows from Schaefer's fixed point theorem.
		\end{proof}
        \begin{rem}
				\label{S3R2}
				We notice that for steps 1-3 of the proof, we only used that $\nu_i\in C^0_b(\mathbb{R}^N\times \overline{\Omega})$, $0<\nu_*\leq \nu_i$ for all $1\leq i\leq N$ and that $G_i\in C^0(\mathbb{R}^N\times \overline{\Omega})$ with $|G_i(p,x)|\leq C_1+C_2|p|$ for all $1\leq i\leq N$. Thus, the continuity and the compactness of $A$ remain valid under these more general assumptions.
			\end{rem}
		Before we can exploit \Cref{S3L1} in order to establish existence of solutions to our boundary value problem (\ref{S2E1}) we need the following result
		\begin{lem}
			\label{S3L3}
			Let $\Omega\subset\mathbb{R}^3$ be a bounded $C^1$-domain, $M\in \mathbb{N}$, and $\phi\in \dot{C}^1_b(\mathbb{R}^{M+1}\times \overline{\Omega})$. Suppose that $\phi$ satisfies the ellipticity condition in (\ref{eq:ellipt}). Then there exists $h\in \dot{C}_b^1(\mathbb{R}^{M+1}\times \overline{\Omega})$ such that 
            \[\phi(p,h(p,s,x),x)=s=h(p,\phi(p,s,x),x),\qquad (p,s,x)\in \mathbb{R}^{M+1}\times \overline{\Omega}.
            \]
		\end{lem}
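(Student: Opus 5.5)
The plan is to construct $h$ by inverting $\phi$ in the variable $s$ for each fixed $(p,x)$, and then to obtain its regularity from the implicit function theorem.

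\emph{Step 1: existence of the inverse.} Fix $(p,x)\in\mathbb{R}^M\times\overline{\Omega}$ and consider $s\mapsto\phi(p,s,x)$. By the ellipticity condition (\ref{eq:ellipt}), $\partial_s\phi\geq\lambda>0$. Hence the mean value theorem gives
\[
\phi(p,s_2,x)-\phi(p,s_1,x)\geq \lambda(s_2-s_1)\qquad\text{for } s_2\geq s_1 .
\]
So this map is strictly increasing, continuous, and tends to $\pm\infty$ as $s\to\pm\infty$. It is therefore a bijection of $\mathbb{R}$. We define $h(p,\cdot,x)$ as its inverse. Both identities $\phi(p,h(p,s,x),x)=s$ and $h(p,\phi(p,s,x),x)=s$ then hold by construction.

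\emph{Step 2: $C^1$ regularity in the interior.} Set $F(p,t,x,s):=\phi(p,t,x)-s$. This is $C^1$ on $\mathbb{R}^{M+1}\times\Omega\times\mathbb{R}$, and $\partial_tF=\partial_s\phi\geq\lambda\neq0$. The implicit function theorem gives, locally, a $C^1$ solution $t=t(p,s,x)$. By the uniqueness from Step 1 this local solution coincides with $h$, so $h\in C^1(\mathbb{R}^{M+1}\times\Omega)$. Differentiating the identity $\phi(p,h,x)=s$ yields the explicit formulas
\[
\partial_s h=\frac{1}{\partial_s\phi(p,h,x)},\qquad \partial_{p_j}h=-\frac{\partial_{p_j}\phi(p,h,x)}{\partial_s\phi(p,h,x)},\qquad \partial_{x_k}h=-\frac{\partial_{x_k}\phi(p,h,x)}{\partial_s\phi(p,h,x)} .
\]

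\emph{Step 3: the $\dot C^1_b$ bounds and extension to $\overline{\Omega}$.} Since $|\nabla\phi|$ is globally bounded and $\partial_s\phi\geq\lambda$, the formulas in Step 2 give $|\nabla h|\leq C(1+\sup|\nabla\phi|)/\lambda$. It remains to show that $\nabla h$ extends continuously to the closure.

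First, $h$ itself is globally Lipschitz with respect to the geodesic distance in the domain, and it is also continuous up to $\overline{\Omega}$. The latter follows from the inequality
\[
|h(p,s,x)-h(p',s',x')|\leq \lambda^{-1}\bigl|\phi(p,h(p,s,x),x)-\phi(p',h(p,s,x),x')\bigr|+\lambda^{-1}|s-s'|,
\]
together with continuity of $\phi$ on $\mathbb{R}^{M+1}\times\overline{\Omega}$ (its gradient is bounded and extends continuously). This inequality holds pointwise for all points including boundary points, using the Step 1 definition of $h$ there.

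Next, $\nabla\phi$ extends continuously to $\mathbb{R}^{M+1}\times\overline{\Omega}$, the extension still satisfies $\partial_s\phi\geq\lambda$, and $h$ is continuous up to the boundary. Consequently the right-hand sides of the formulas in Step 2 are compositions of continuous maps on $\mathbb{R}^{M+1}\times\overline{\Omega}$. They therefore furnish the required continuous extension of $\nabla h$.

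The main obstacle is this boundary step. Since $\phi$ is only assumed $C^1$ in the interior with gradient extending continuously, one has to check that $h$ is well defined and continuous at boundary points. The plan is to handle this through the uniform monotonicity estimate above, rather than through the implicit function theorem.
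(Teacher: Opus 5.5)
Your proof is correct. Its skeleton matches the paper's: invert $s\mapsto\phi(p,s,x)$ for fixed $(p,x)$, then use the implicit function theorem. The continuity and boundary steps, however, go a genuinely different way.

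The paper first proves that $h$ is globally Lipschitz. It integrates $\nabla\phi$ along the straight segment joining $(p_1,t_1,x_1)$ and $(p_2,t_2,x_2)$ and solves for $t_2-t_1$. It then applies the implicit function theorem at every point of $\mathbb{R}^{M+1}\times\overline{\Omega}$, boundary points included, and uses continuity of $h$ to identify $h$ with the local implicit solution. Boundedness of $\nabla h$ is read off from the Lipschitz constant.

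You proceed differently at each of these points:
\begin{itemize}
\item You get continuity of $h$ from the one-variable estimate $\lambda|t-t'|\le|s-s'|+|\phi(p,t,x)-\phi(p',t,x')|$, with $t=h(p,s,x)$ and $t'=h(p',s',x')$. This uses only monotonicity in $s$ at the fixed point $(p',x')$ together with continuity of $\phi$.
\item You identify the implicit solution with $h$ directly through global uniqueness of the root, so continuity of $h$ is not needed at that stage.
\item You apply the implicit function theorem only for $x\in\Omega$.
\item You bound $\nabla h$ and extend it to the closure through the explicit formulas $\partial_s h=1/\partial_s\phi(p,h,x)$, $\partial_{p_j}h=-\partial_{p_j}\phi/\partial_s\phi$, and so on.
\end{itemize}

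Your route is the more careful one at the boundary. It never needs $x_1+\sigma(x_2-x_1)$ to remain in $\overline{\Omega}$, which the paper's segment argument tacitly assumes; that is a genuine issue for non-convex $\Omega$. Nor does it need a $C^1$ extension of $\phi$ across $\partial\Omega$, which the implicit function theorem at boundary points tacitly requires. The paper's route, where it applies, gives the stronger conclusion that $h$ is Lipschitz in the Euclidean metric, but this is not needed for $h\in\dot{C}^1_b$.

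One small addition is needed. The ellipticity condition is imposed only in $\Omega$, so at $x\in\partial\Omega$ your Step 1 inequality $\phi(p,s_2,x)-\phi(p,s_1,x)\ge\lambda(s_2-s_1)$ should be obtained by letting interior points tend to $x$, not by applying the mean value theorem there directly.
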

		\begin{proof}
			\textit{Step 1: Existence of a continuous inverse $h$.} For fixed $(p,x)\in \mathbb{R}^M\times \overline{\Omega}$, the map $s\mapsto \phi(p,s,x)$ is strictly increasing and maps $\mathbb{R}$ onto $\mathbb{R}$ (since $0<\lambda\leq (\partial_s\phi)(p,s,x)$). Consequently, there is some function $h\colon \mathbb{R}^{M+1}\times \overline{\Omega}\to\mathbb R$ such that   $\phi(p,h(p,s,x),x)=s=h(p,\phi(p,s,x),x)$ for all $(p,s,x)\in \mathbb{R}^{M+1}\times \overline{\Omega}$. Furthermore, for fixed $(p,x)$, $h(p,\cdot,x)$  is $C^1$ and
			\begin{gather}
				\label{S3E8}
				(\partial_sh)(p,s,x)=\frac{1}{(\partial_s\phi)(p,h(p,s,x),x)}.
			\end{gather}
			We first show that $h$ is continuous as a function of $(p,s,x)$. Let $(p_j,s_j,x_j)\in \mathbb{R}^{M+1}\times \overline{\Omega}$, $j=1,2$, and observe that by definition
			\begin{gather}
				\nonumber
				s_2-s_1=\phi(p_2,h(p_2,s_2,x_2),x_2)-\phi(p_1,h(p_1,s_1,x_1),x_1).
			\end{gather}
			Now set $t_j:=h(p_j,s_j,x_j)$ and $w_j:=(p_j,t_j,x_j)$. Then
			\begin{gather}
				\nonumber
				s_2-s_1=\phi(p_2,t_2,x_2)-\phi(p_1,t_1,x_1)=\phi(w_2)-\phi(w_1)=\int_0^1\frac{d}{d\sigma}\left(\phi(w_1+\sigma (w_2-w_1))\right) d\sigma
				\\
				\nonumber
				=(t_2-t_1)\int_0^1(\partial_t\phi)(w_1+\sigma (w_2-w_1))d\sigma+(p_2-p_1)\cdot \int_0^1(\nabla_p\phi)(w_1+\sigma (w_2-w_1))d\sigma
				\\
				\nonumber
				+(x_2-x_1)\cdot \int_0^1(\nabla_x\phi)(w_1+\sigma(w_2-w_1))d\sigma.
			\end{gather}
			We can solve the above equation for $(t_2-t_1)$ and note that since $0<\lambda\leq (\partial_t\phi)$ and since all derivatives of $\phi$ are uniformly bounded, we find $|t_2-t_1|\leq L(|s_2-s_1|+|p_2-p_1|+|x_2-x_1|)$. Lastly, recalling that $t_j=h(p_j,s_j,x_j)$, we conclude that $h$ is (globally Lipschitz) continuous on $\mathbb{R}^{M+1}\times \overline{\Omega}$.
            \newline
            \newline
            \textit{Step 2: Existence and continuity of first derivatives.} It follows first from (\ref{S3E8}) and the continuity of $\partial_s\phi$ and $h$ that $\partial_sh$ is also continuous.
			
			To prove the differentiability of $h$ with respect to $p$ and $x$ we use the implicit function theorem. Fix any $s_0\in \mathbb{R}, p_0\in \mathbb{R}^M, x_0\in \overline{\Omega}$ and set $t_0:=h(p_0,s_0,x_0)$. According to the implicit function theorem, there exist a neighbourhood $U$ of $(p_0,x_0)$, an open neighbourhood $I$ of $t_0$ and a $C^1$-function $g\colon U\rightarrow I$ such that if $(t,p,x)\in I\times U$ and $\phi(p,t,x)=s_0$, then $t=g(p,x)$. We observe now that $(t_0,p_0,x_0)\in I\times U$ and that $t_0=h(p_0,s_0,x_0)$. Then by continuity of $h$ the set $V:=h^{-1}(I)$ is an open neighbourhood of $(s_0,p_0,x_0)$ in $\mathbb{R}^{1+M}\times \overline{\Omega}$. In particular, there is some open interval $\widetilde{I}$ containing $s_0$ and some open neighbourhood $W\subset U$ of $(p_0,x_0)$ such that $\widetilde{I}\times W\subset V$. Then for every $(p,x)\in W$ we find $\phi(p,h(p,s_0,x),x)=s_0$, $h(p,s_0,x)\in I$, $(p,x)\in U$ and thus by the implicit function theorem we must have $h(p,s_0,x)=g(p,x)$, which is of class $C^1$ in the variables $p$ and $x$. This implies that the partial derivatives of $h$ w.r.t. $p$ and $x$ exist. We may then differentiate the expression $\phi(p,h(p,s,x),x)=s$ with respect to $p$ and $x$ and exploit the continuity of $h$ as a function of $(p,s,x)$ to deduce that $\nabla_xh$ and $\nabla_ph$ are both continuous functions of $(p,s,x)$.
            \newline
            \newline
            \textit{Step 3: Boundedness of the derivatives.} We have shown in step 1 that $h$ is globally Lipschitz continuous. Therefore all of its derivatives are globally bounded. Overall $h\in\dot{C}^1_b(\mathbb{R}^{M+1}\times \overline{\Omega})$.
		\end{proof}
		We are now ready to prove the existence of solutions for our forward problem (\ref{S2E1}).
		\begin{proof}[Proof of \Cref{S3C4}]
			In accordance with \Cref{S3L3}, we let $h$ denote the inverse of $\phi$ and define
            \begin{gather}
                \nonumber
                \nu_i(p,s,x):=D_i(p,h(p,s,x),x),\quad G_i(p,s,x):=g_i(p,h(p,s,x),x),\qquad 1\leq i\leq M.
            \end{gather}
            In addition, we let $G_{M+1}(p,s,x):=-p\cdot q$ and $\nu_{M+1}(p,s,x):=\epsilon(x)$. By \Cref{S3L1}, the following system of equations admits a weak solution $(c,\sigma)\in \left(H^1(\Omega)\right)^{M+1}$:
			\begin{equation*}
\left\{
\begin{array}{l}
     -\operatorname{div}(\nu_i(c(x),\sigma(x),x)\nabla c_i(x))=G_i(c(x),\sigma(x),x),\qquad 1\leq i\leq M,  \\
     -\operatorname{div}(\epsilon(x) \nabla \sigma(x))=G_{M+1}(c(x),\sigma(x),x),\\ 
     (c,\sigma)|_{\partial\Omega}=(\gamma,\phi(\gamma,\tau,\cdot)).
\end{array}
\right.                
			\end{equation*}
			We now set $T(x):=h(c(x),\sigma(x),x)\in H^1(\Omega)$ and observe that we have the identities for $1\leq i\leq M$
            \begin{align*}
        \nu_i(c(x),\sigma(x),x)&=D_i(c(x),h(c(x),\sigma(x),x),x)=D_i(c(x),T(x),x),
        \\
        G_i(c(x),\sigma(x),x)&=g_i(c(x),T(x),x).
            \end{align*}
            We deduce that, for $1\leq i\leq M$, we have
            \begin{gather}
                \nonumber
              -\operatorname{div}(D_i(c(x),T(x),x)\nabla c_i(x))=g_i(c(x),T(x),x)\;\text{ in }\Omega.  
            \end{gather}
            Further, $\phi(c(x),T(x),x)=\phi(c(x),h(c(x),\sigma(x),x),x)=\sigma(x)$ and therefore
            \begin{gather}
                \nonumber
             \operatorname{div}(\epsilon \nabla (\phi(c,T,x)))=\operatorname{div}(\epsilon \nabla \sigma)=\operatorname{div}(\nu_{M+1}\nabla \sigma)=c\cdot q\text{ in }\Omega.   
            \end{gather}
            Lastly, $T|_{\partial\Omega}=h(c|_{\partial\Omega},\sigma|_{\partial\Omega},x)=h(\gamma,\phi(\gamma,\tau,x),x)=\tau$, so that $(c,T)$ is a weak solution of (\ref{S3E9}), as desired.
		\end{proof}
		\subsubsection{Uniqueness of weak solutions}
		\label{Uniqueness}
		In this subsection we focus on the source-free system (\ref{S2E3}) with constant Dirichlet-boundary conditions. The uniqueness of these solutions will be used during the second part of the proof of \Cref{S2T3}.
		\begin{proof}[Proof of \Cref{S3P5}]
			We recall that, by assumption, $\gamma=(\gamma_1,\dots,\gamma_M)\in \mathbb{R}^M$ is a constant and observe first that we have the identity
            \begin{gather}
                \nonumber
                \operatorname{div}(D_i(\gamma(x),T(x),x)\nabla \gamma_i)=0\text{ in $\Omega$},\qquad T\in H^1(\Omega).
            \end{gather}
            Let us then define $\phi_0(x):=\phi(\gamma,\tau(x),x)\in W^{\frac{1}{2},2}(\partial\Omega)$, where $\tau\in W^{\frac{1}{2},2}(\partial\Omega)$ is the prescribed boundary condition for the temperature, recall (\ref{S3E11}). Further, let $L^{-1}_{\epsilon}$ denote the solution operator corresponding to the linear elliptic PDE
			\begin{gather}
				\label{S3E12}
				\operatorname{div}(\epsilon \nabla \eta)=v \text{ in }\Omega,\quad\eta|_{\partial\Omega}=\phi_0,
			\end{gather}
			so that $L^{-1}_{\epsilon}(v)=\eta$ is the unique weak $H^1(\Omega)$ solution of (\ref{S3E12}). Now, let $h$ denote the inverse of $\phi$ as in \Cref{S3L3}. It is then easy to see that
            \begin{gather}
                \nonumber
                (\gamma,\theta):=(\gamma, h(\gamma,L^{-1}_{\epsilon}(\gamma\cdot q)(x),x))
            \end{gather}
            provides a solution to (\ref{S3E11}). Suppose now that $(c,T)$ is any other solution to (\ref{S3E11}). Then, similarly as in the proof of \Cref{S3L1}, we find
			\begin{gather}
				\nonumber
				\int_{\Omega}D_i(c,T,x)\nabla c_i\cdot \nabla c_idx=\int_{\Omega}D_i(c,T,x)\nabla c_i\cdot \nabla (c_i-\gamma_i)dx=0,
			\end{gather}
			where we used that the $\gamma_i$ are constant, that $c_i-\gamma_i\in H^1_0(\Omega)$ and that $\operatorname{div}(D_i(c,T,x)\nabla c_i)=0$ in the weak sense. Further, $0<\lambda\leq D_i$ for all $1\leq i\leq M$ from which we conclude that $\|\nabla c_i\|_{L^2(\Omega)}=0$ and consequently $c_i=\gamma_i$ in $\Omega$ for all $1\leq i\leq M$ and hence $c=\gamma$.
			
			Further, we have
            \begin{gather}
                \nonumber
                \operatorname{div}(\epsilon \nabla (\phi(c,T,x)))=q\cdot c=q\cdot \gamma\quad \text{ and }\quad\phi(c,T,x)|_{\partial\Omega}=\phi_0,
            \end{gather}
            so that $\phi(c,T,x)=\phi(\gamma,T,x)$ coincides with the unique weak solution to (\ref{S3E12}) with $v=q\cdot\gamma$. By uniqueness, we find that $\phi(\gamma,T,x)=L^{-1}_{\epsilon}(q\cdot \gamma)$, and consequently
            \[                T=h(\gamma,\phi(\gamma,T,x),x)=h(\gamma,L^{-1}_{\epsilon}(q\cdot \gamma)(x),x)=\theta.
            \]
            So overall $(c,T)=(\gamma,\theta)$, and the solution is unique.
		\end{proof}
		\begin{rem}
			\label{S3R6}
			There is no uniqueness in the general setting of \Cref{S3L1}, as can already be seen in the scalar setting $N=1$. Suppose that $\partial\Omega\in C^2$. Let $u\in H^1_0(\Omega)$ be an eigenfunction of the Dirichlet Laplacian in $\Omega$, with eigenvalue $\lambda>0$. By elliptic regularity, $u\in H^2(\Omega)\subset C^0(\overline{\Omega})$. We can then let $\rho\colon\mathbb{R}\rightarrow \mathbb{R}$ be a smooth bump function which is compactly supported and identical $1$ on the image of  $u$. We can then set $\nu_1\equiv 1$ and $G_1(\eta,x):=\lambda\eta \rho(\eta)$. Then $G_1$ is globally bounded and the boundary value problem
			\begin{gather}
				\nonumber
				-\Delta \eta=G_1(\eta,x)=\lambda\eta(x) \rho(\eta(x)), \qquad \eta|_{\partial\Omega}=0
			\end{gather}
			admits the solutions $\eta\equiv 0$ and $\eta=u$. Thus, in the presence of sources additional constraints need to be imposed to guarantee uniqueness. The same example extends to the setting $N\geq 2$ by selecting $\nu_1\equiv 1$, $G_1(\eta,x):=\lambda\eta_1 \rho(\eta_1)$ and $\nu_i\equiv 1$, $G_i\equiv 0$ for $2\leq i\leq N$ and  considering $\eta|_{\partial\Omega}=0$ with the two distinct solutions $\eta=0$ and $\eta=(u,0,\dots,0)$.
		\end{rem}
		\subsection{Boundary measurements}
		\subsubsection{Voltage boundary measurements}
		We prove here that voltage boundary measurements $\phi(\gamma(x),\tau(x),x)-\phi(\gamma(y),\tau(y),y)$ uniquely determine $\phi(p,s,x)$ up to a constant along the boundary $\partial\Omega$. This is a key ingredient of the proof of \Cref{S2T1}.
		\begin{lem}
			\label{S3L7}
			Let $\Omega\subset\mathbb{R}^3$ be a bounded domain with connected $C^1$-boundary, $M\in \mathbb{N}$ and $\phi^1,\phi^2\in C^1(\mathbb{R}^{M+1}\times \overline{\Omega})$. The following two statements are equivalent:
			\begin{enumerate}
				\item For every $(\gamma,\tau)\in \bigl(W^{\frac{1}{2},2}(\partial\Omega)\bigr)^{M+1}$ we have $$\phi^1(\gamma(x),\tau(x),x)-\phi^1(\gamma(y),\tau(y),y)=\phi^2(\gamma(x),\tau(x),x)-\phi^2(\gamma(y),\tau(y),y)\quad\text{for $\mathcal{H}^2$-a.e. $x,y\in \partial\Omega$;}$$
				\item There exists some $r\in \mathbb{R}$ such that for all $(p,s,x)\in \mathbb{R}^{M+1}\times \partial\Omega$ we have $\phi^1(p,s,x)-\phi^2(p,s,x)=r$.
			\end{enumerate}
		\end{lem}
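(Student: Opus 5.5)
The implication (ii)$\Rightarrow$(i) is immediate. If $\phi^1-\phi^2\equiv r$ on $\mathbb{R}^{M+1}\times\partial\Omega$, then for any $(\gamma,\tau)$ and a.e.\ $x,y\in\partial\Omega$ we have
\[
\phi^1(\gamma(x),\tau(x),x)-\phi^2(\gamma(x),\tau(x),x)=r=\phi^1(\gamma(y),\tau(y),y)-\phi^2(\gamma(y),\tau(y),y).
\]
Rearranging gives (i). So the work lies in (i)$\Rightarrow$(ii). Set $\psi:=\phi^1-\phi^2\in C^1(\mathbb{R}^{M+1}\times\overline{\Omega})$. Condition (i) says exactly that for every admissible $(\gamma,\tau)$ the function $F_{\gamma,\tau}(x):=\psi(\gamma(x),\tau(x),x)$ is $\mathcal{H}^2$-a.e.\ equal to a constant $r(\gamma,\tau)$ on $\partial\Omega$. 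Indeed, fix a point $y$ from the full-measure set where the identity holds for a.e.\ $x$; Fubini on $\partial\Omega\times\partial\Omega$ provides such a $y$.

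\textbf{Step 1: constant data.} I first test (i) with constant boundary data $(\gamma,\tau)\equiv(p,s)\in\mathbb{R}^{M+1}$, which certainly lie in $W^{\frac12,2}(\partial\Omega)$. Then $x\mapsto\psi(p,s,x)$ is continuous on $\partial\Omega$ and a.e.\ constant. Since every nonempty relatively open subset of the $C^1$ surface $\partial\Omega$ has positive $\mathcal{H}^2$-measure, it is constant everywhere on $\partial\Omega$. Thus $\psi(p,s,x)=r(p,s)$ for all $x\in\partial\Omega$, and $r$ is continuous on $\mathbb{R}^{M+1}$.

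\textbf{Step 2: $r$ is constant.} Here I use smooth non-constant data to connect different values $(p,s)$. Since $\partial\Omega$ has at least two points, pick $x_0\neq x_1$ on $\partial\Omega$ and $(p_0,s_0),(p_1,s_1)\in\mathbb{R}^{M+1}$. Choose a function $(\gamma,\tau)$ that is the restriction to $\partial\Omega$ of a smooth map $\mathbb{R}^3\to\mathbb{R}^{M+1}$ with values $(p_0,s_0)$ near $x_0$ and $(p_1,s_1)$ near $x_1$. Such a restriction is Lipschitz, hence in $W^{\frac12,2}(\partial\Omega)$. By Step 1, $F_{\gamma,\tau}(x)=r(\gamma(x),\tau(x))$, which is continuous and hence constant everywhere on $\partial\Omega$ by the same positive-measure argument. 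Evaluating at $x_0$ and $x_1$ gives $r(p_0,s_0)=r(p_1,s_1)$, so $r$ is a single constant. Connectedness of $\partial\Omega$ is not needed beyond the existence of two distinct points. Alternatively, one can use $\tau$ ranging continuously along a path on the connected boundary.

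\textbf{Main obstacle.} The only delicate point is measure-theoretic. We must pass from ``for a.e.\ $x,y$'', with the exceptional set depending on $(\gamma,\tau)$, to pointwise constancy. We must also check that the test data are genuinely admissible in $W^{\frac12,2}(\partial\Omega)$. The first is handled by continuity of $F_{\gamma,\tau}$ for Lipschitz data together with full support of $\mathcal{H}^2$ on $\partial\Omega$. The second holds because Lipschitz functions on a compact $C^1$ boundary belong to $W^{\frac12,2}$.
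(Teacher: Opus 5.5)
Your proof is correct, but it takes a different route from the paper.

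The paper restricts to $C^1$ data and differentiates the boundary identity tangentially. Constant data then give $\nabla^{\partial\Omega}_x\phi^1=\nabla^{\partial\Omega}_x\phi^2$. Data with one component having nonzero tangential gradient at a chosen point give $\partial_{z_j}\phi^1=\partial_{z_j}\phi^2$ on $\mathbb{R}^{M+1}\times\partial\Omega$. The paper concludes that $\phi^1-\phi^2$ has vanishing gradient there, and uses connectedness of $\partial\Omega$ to obtain a constant.

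You work with the undifferentiated identity instead. Constant data make $\psi(p,s,\cdot)$ constant on all of $\partial\Omega$. A single Lipschitz datum taking two prescribed values at two points then forces these constants to agree.

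Your route has three advantages:
\begin{itemize}
\item It needs only continuity of $\phi^1,\phi^2$, not $C^1$ regularity.
\item It handles explicitly, via full support of $\mathcal{H}^2$ on the $C^1$ surface $\partial\Omega$, the passage from $\mathcal{H}^2$-a.e.\ to everywhere. The paper uses this tacitly when it states the identity for all $x,y$ for $C^1$ data.
\item As you correctly observe, it shows that connectedness of $\partial\Omega$ is unnecessary for (i)$\Rightarrow$(ii). Condition (i) already compares points lying on different components, and that information is discarded once the identity is differentiated.
\end{itemize}

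The derivative identities the paper obtains along the way give nothing extra, since they follow immediately from (ii).
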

		\begin{proof}[Proof of \Cref{S3L7}]
			The implication $(ii)\Rightarrow (i)$ is immediate. For the converse, we notice that the temperature $\tau$ does not play any distinguished role in the context of \Cref{S3L7} so that we set
            \begin{gather}
                \nonumber
             N:=M+1\text{, }\rho:=(\gamma,\tau)\text{ and } z:=(p,s).
            \end{gather}
            We need to show that the identity
            \begin{gather}
                \nonumber
                \phi^1(\rho(x),x)-\phi^1(\rho(y),y)=\phi^2(\rho(x),x)-\phi^2(\rho(y),y)\text{ for all }\rho\in \left(W^{\frac{1}{2},2}(\partial\Omega)\right)^N\text{ and }\mathcal{H}^2\text{-a.e. } x,y\in \partial\Omega
            \end{gather}
            implies that $\phi^1(z,x)-\phi^2(z,x)=r$ for all $(z,x)\in \mathbb{R}^N\times \partial\Omega$ and some  $r\in\R$ independent of $(z,x)$.
            
            We notice that if $\rho\in (C^1(\partial\Omega))^N$, then $\phi^1(\rho(\cdot),\cdot)\in C^1(\partial\Omega)$, and the identity
            \begin{gather}
                \nonumber
                \phi^1(\rho(x),x)-\phi^1(\rho(y),y)=\phi^2(\rho(x),x)-\phi^2(\rho(y),y),\qquad x,y\in\partial\Omega,
            \end{gather}
             implies that $\nabla^{\partial\Omega}_x(\phi^1(\rho(x),x))=\nabla^{\partial\Omega}_x(\phi^2(\rho(x),x))$ for all $x\in \partial\Omega$, where $\nabla^{\partial\Omega}_x$ denotes the tangential gradient. This yields 
			\begin{gather}
				\label{S3E13}
				\sum_{i=1}^N(\partial_{z_i}\phi^1)(\rho(x),x)\nabla^{\partial\Omega}_x\rho_i(x)+(\nabla^{\partial\Omega}_x\phi^1)(\rho(x),x)=\sum_{i=1}^N(\partial_{z_i}\phi^2)(\rho(x),x)\nabla^{\partial\Omega}_x\rho_i(x)+(\nabla^{\partial\Omega}_x\phi^2)(\rho(x),x).
			\end{gather}
			 We now fix $z\in \mathbb{R}^N$ and consider $\rho(x)\equiv z \in (C^1(\partial\Omega))^N$. We then find $(\nabla^{\partial\Omega}_x\rho_i)\equiv 0$ since each $\rho_i$ is constant, and so (\ref{S3E13}) implies
			\begin{gather}
				\label{S3E14}
				(\nabla^{\partial\Omega}_x\phi^1)(z,x)=(\nabla^{\partial\Omega}_x\phi^2)(z,x),\qquad (z,x)\in \mathbb{R}^N\times \partial\Omega.
			\end{gather}
			Combining (\ref{S3E14}) with (\ref{S3E13}) then yields the identity
			\begin{gather}
				\label{S3E15}
				\sum_{i=1}^N(\partial_{z_i}\phi^1)(\rho(x),x)\nabla^{\partial\Omega}_x\rho_i(x)=\sum_{i=1}^N(\partial_{z_i}\phi^2)(\rho(x),x)\nabla^{\partial\Omega}_x\rho_i(x),\qquad \rho\in (C^1(\partial\Omega))^N,\;x\in \partial\Omega.
			\end{gather}
			Now fix $z\in \mathbb{R}^N$, $x_0\in \partial\Omega$ and  $1\leq j\leq N$. Pick $\rho_i(x)\equiv z_i$ for $i\neq j$, which again implies $\nabla^{\partial\Omega}_x\rho_i(x)\equiv 0$ for $i\neq j$. Then pick $\rho_j\in C^1(\partial\Omega)$ such that $\rho_j(x_0)=z_j$ and  $(\nabla^{\partial\Omega}_x\rho_j)(x_0)\neq 0$ (which is clearly always possible by a local construction). Then (\ref{S3E15}) implies
            \begin{gather}
                \nonumber
                (\partial_{z_j}\phi^1)(z,x_0)(\nabla^{\partial\Omega}_x\rho_j)(x_0)=(\partial_{z_j}\phi^2)(z,x_0)(\nabla^{\partial\Omega}_x\rho_j)(x_0),
            \end{gather}
            and in turn we have $(\partial_{z_j}\phi^1)(z,x_0)=(\partial_{z_j}\phi^2)(z,x_0)$. Since $z$, $x_0$ and $j$ were arbitrary, we deduce
			\begin{gather}
				\label{S3E16}
				(\partial_{z_i}\phi^1)(z,x)=(\partial_{z_i}\phi^2)(z,x),\qquad (z,x)\in \mathbb{R}^N\times \partial\Omega\text{, }1\leq i\leq N.
			\end{gather}
			Now (\ref{S3E14}) and (\ref{S3E16}) imply that the map $\phi^1-\phi^2:\mathbb{R}^N\times \partial\Omega\rightarrow \mathbb{R}$, $(z,x)\mapsto \phi^1(z,x)-\phi^2(z,x)$ has a vanishing gradient. Since by assumption $\partial\Omega$ is connected, so is $\mathbb{R}^N\times \partial\Omega$ and hence we conclude that $\phi^1(z,x)-\phi^2(z,x)=r$ for some $r\in \mathbb{R}$  independent of $(z,x)$.
		\end{proof}
		\subsubsection{Proof of \texorpdfstring{\Cref{S2T1}}{Theorem S2T1}}
        \label{ProofBoundary}
		\begin{proof}[Proof of \Cref{S2T1}]
			According to \Cref{S3L7} conditions (i,a) and (ii,b) are equivalent so that we may assume throughout that $\phi^1(p,s,x)-\phi^2(p,s,x)=r$ for some $r\in \mathbb{R}$ and all $(p,s,x)\in \mathbb{R}^{M+1}\times \partial\Omega$.
            We further notice that $\Lambda_{\operatorname{DN}}[\phi]$ depends on $\phi$ only through its gradient, recall (\ref{S2E1}), so that we may suppose that $r=0$ throughout the upcoming considerations.
  
            We are left with showing that, under the above assumptions, conditions (i,b) and (ii,a) are equivalent.
            
            We notice that $\phi^1(p,s,x)=\phi^2(p,s,x)$ for all $(p,s,x)\in \mathbb{R}^{M+1}\times \partial\Omega$ implies $(\nabla^{\partial\Omega}_x\phi^1)(p,s,x)=(\nabla^{\partial\Omega}_x\phi^2)(p,s,x)$ for all $(p,s,x)\in \mathbb{R}^{M+1}\times \partial\Omega$ where $\nabla^{\partial\Omega}_x$ denotes the tangential gradient. We hence have to show that (i,b) is equivalent to the condition $\mathcal{N}\cdot (\nabla_x \phi^1)(p,s,x)=\mathcal{N}\cdot (\nabla_x \phi^2)(p,s,x)$ for all $(p,s,x)\in \mathbb{R}^{M+1}\times \partial\Omega$.

            To this end,  for fixed $(\gamma,\tau)\in \bigl( W^{\frac{1}{2},2}(\partial\Omega)\bigr)^{M+1}$, set
            \begin{gather}
                \nonumber
                \eta_0(x):=\phi^1(\gamma(x),\tau(x),x).
            \end{gather}
            For given $v\in L^2(\Omega)$, denote by $L^{-1}_{\epsilon}(v)$ the unique  ($H^1(\Omega)$-)weak solution of the boundary value problem
			\begin{gather}
				\label{S3E17}
				\operatorname{div}(\epsilon \nabla L^{-1}_{\epsilon}(v))=v,\quad L^{-1}_{\epsilon}(v)|_{\partial\Omega}=\eta_0.
			\end{gather}
			We now let $(c,T)$ and $(\overline{c},\overline{T})$ denote the solutions to (\ref{S2E1}) with boundary data $(\gamma,\tau)$ and with respect to the potentials $\phi^1$ and $\phi^2$, respectively. We set
            \begin{gather}
                \nonumber
\sigma(x):=\phi^1(c(x),T(x),x)\text{ and }\overline{\sigma}(x):=\phi^2(\overline{c}(x),\overline{T}(x),x).
            \end{gather}
            On the one hand, according to (\ref{S2E1}), we have
            \begin{gather}
                \nonumber
                \operatorname{div}(\epsilon \nabla\sigma)=c\cdot q\text{ and }\operatorname{div}(\epsilon \nabla \overline{\sigma})=\overline{c}\cdot q.
            \end{gather}
            On the other hand, since $\phi^1$ and $\phi^2$ coincide on the boundary,
            \begin{gather}
                \nonumber
                \sigma|_{\partial\Omega}=\phi^1(\gamma(x),\tau(x),x)=\eta_0(x)=\phi^2(\gamma(x),\tau(x),x)=\overline{\sigma}|_{\partial\Omega}.
            \end{gather}
            We conclude that $\sigma=L^{-1}_{\epsilon}(c\cdot q)$ and $\overline{\sigma}=L^{-1}_{\epsilon}(\overline{c}\cdot q)$.
			Since by assumption the $D_i$ and $g_i$ depend on position alone, we see that $c_i$ and $\overline{c}_i$ satisfy the same linear elliptic PDE, (\ref{S2E1}), with the same boundary conditions. Thus, $c(x)=\overline{c}(x)$ throughout $\Omega$.
            
            This implies that the Dirichlet-to-Neumann maps $\Lambda_{\operatorname{DN}}[\phi^1]$ and $\Lambda_{\operatorname{DN}}[\phi^2]$ coincide if and only if $\mathcal{N}\cdot \nabla T=\mathcal{N}\cdot \nabla \overline{T}$ for all prescribed Dirichlet data. Further,
            \begin{gather}
                \nonumber
                \sigma=L^{-1}_{\epsilon}(c\cdot q)=L^{-1}_{\epsilon}(\overline{c}\cdot q)=\overline{\sigma}.
            \end{gather}
			We can apply the gradient to this identity, and we obtain for $x\in\Omega$:
			\begin{gather}
				\nonumber
				(\partial_s\phi^1)(c(x),T(x),x)\nabla T(x)+\sum_{i=1}^M(\partial_{p_i}\phi^1)(c(x),T(x),x)\nabla c_i(x)+(\nabla_x\phi^1)(c(x),T(x),x)
				\\
				\nonumber
				=(\partial_s\phi^2)(\overline{c}(x),\overline{T}(x),x)\nabla \overline{T}(x)+\sum_{i=1}^M(\partial_{p_i}\phi^2)(\overline{c}(x),\overline{T}(x),x)\nabla \overline{c}_i(x)+(\nabla_x\phi^2)(\overline{c}(x),\overline{T}(x),x).
			\end{gather}
			Now, since $\phi^1(p,s,x)=\phi^2(p,s,x)$ for all $(p,s,x)\in \mathbb{R}^{M+1}\times \partial\Omega$, we find $(\partial_s\phi^1)(p,s,x)=(\partial_s\phi^2)(p,s,x)$  and  $(\partial_{p_i}\phi^1)(p,s,x)=(\partial_{p_i}\phi^2)(p,s,x)$ for all $(p,s,x)\in \mathbb{R}^{M+1}\times \partial\Omega$.

			Since $c(x)=\overline{c}(x)$ in $\Omega$, the identities $\mathcal{N}\cdot (D_i(x)\nabla c_i(x))=\mathcal{N}\cdot (D_i(x)\nabla \overline{c}_i)$ are always satisfied. Recalling that $\partial_s\phi^1$ and $\partial_s\phi^2$, as well as $T$ and $\overline{T}$, coincide on the boundary, we have for $x\in\partial\Omega$
			\begin{gather}
				\nonumber
				(\partial_s\phi^1)(\gamma(x),\tau(x),x)\mathcal{N}\cdot \nabla T(x)+\mathcal{N}(x)\cdot(\nabla_x\phi^1)(\gamma(x),\tau(x),x)
				\\
\nonumber
				=(\partial_s\phi^1)(\gamma(x),\tau(x),x)\mathcal{N}\cdot \nabla \overline{T}(x)+\mathcal{N}(x)\cdot(\nabla_x\phi^2)(\gamma(x),\tau(x),x).
			\end{gather}
			We deduce  that the condition $\mathcal{N}\cdot \nabla T=\mathcal{N}\cdot \nabla \overline{T}$ is equivalent to $\mathcal{N}\cdot (\nabla_x\phi^1)(\gamma(x),\tau(x),x)=\mathcal{N}\cdot (\nabla_x\phi^2)(\gamma(x),\tau(x),x)$ for all $(\gamma,\tau)\in \bigl(W^{\frac{1}{2},2}(\partial\Omega)\bigr)^{M+1}$ and all $x\in \partial\Omega$, which in turn is equivalent to the identity $\mathcal{N}\cdot (\nabla_x\phi^1)(p,s,x)=\mathcal{N}\cdot (\nabla_x\phi^2)(p,s,x)$ for all $(p,s,x)\in \mathbb{R}^{M+1}\times \partial\Omega$.
			 This concludes the proof.
		\end{proof}
		\subsection{Interior measurements}
        \label{InteriorMeasure}
		\subsubsection{A Linearisation lemma}
		Before we come to the proof of \Cref{S2T3}, we will prove a linearisation lemma, which may be regarded as a generalisation of a result by Sun \cite[Lemma 2.10]{Sun96}.
		\begin{lem}[Linearisation lemma]
			\label{S3L8}
			Let $\Omega\subset\mathbb{R}^3$ be a bounded domain with (possibly disconnected) $C^{1,\beta}$-boundary for some $0<\beta\leq 1$. Let $M,N\in \mathbb{N}$, $0<\alpha\leq 1$, $\nu_*>0$ and suppose that $\nu=(\nu_1,\dots,\nu_M)\in C^{0,\alpha}_b\left(\mathbb{R}^N\times \overline{\Omega},\mathbb{R}^M\right)$ with $\nu_*\leq \nu_i$ for all $1\leq i\leq M$. Further, let $A\colon\left(L^2(\Omega)\right)^M\rightarrow \left(L^2(\Omega)\right)^N$ be a continuous function with the following two properties:
			\begin{enumerate}
				\item For every $\mu\in \mathbb{R}^M\subset \left(L^2(\Omega)\right)^M$ there is some $0<\alpha_{\mu}\leq 1$ such that $A(\mu)\in \left(C^{0,\alpha_{\mu}}(\overline{\Omega})\right)^M$,
				\item For every $\mu\in \mathbb{R}^M$ there is some $L_{\mu}\in (0,\infty)$ such that $\|A(\mu+v)-A(\mu)\|_{L^2(\Omega)}\leq L_{\mu}\|v\|_{L^2(\Omega)}$ for all $v\in \left(L^2(\Omega)\right)^M$.
			\end{enumerate}
			Take $f\in \left(C^{1,\beta}(\partial\Omega)\right)^M$ and $\mu\in \mathbb{R}^M$. For $t>0$, let $F^t\in \left(H^1(\Omega)\right)^M$ and $u\in \left(H^1(\Omega)\right)^M$ be weak solutions to the systems of equations
			\begin{gather}
				\label{S3E19}
				\operatorname{div}\left(\nu_i(A(F^t)(x),x)\nabla F^t_i(x)\right)=0\text{, }1\leq i\leq M\text{, }F^t|_{\partial\Omega}=\mu+tf,
				\\
				\label{S3E20}
				\operatorname{div}(\nu_i(A(\mu)(x),x)\nabla u_i(x))=0\text{, }1\leq i\leq M\text{, }u|_{\partial\Omega}=f.
			\end{gather}
			Then, for every $1\leq i\leq M$,
			\begin{gather}
				\label{S3E21}
				\frac{\mathcal{N}\cdot \left( \nu_i(A(F^t)(x),x)\nabla F^t_i\right)}{t}\rightarrow \mathcal{N}\cdot \left( \nu_i(A(\mu)(x),x)\nabla u_i\right)\text{ as }t\rightarrow 0\text{ in }W^{-\frac{1}{2},2}(\partial\Omega).
			\end{gather}
		\end{lem}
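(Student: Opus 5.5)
Our plan is to follow Sun's strategy: show that $w^t:=(F^t-\mu)/t$ converges to $u$ in $H^1(\Omega)$, and that the coefficients $\nu_i(A(F^t),x)$ converge to $\nu_i(A(\mu),x)$ strongly enough to pass to the limit in the normal traces. Since $\mu$ is constant, $w^t$ weakly solves $\operatorname{div}(\nu_i(A(F^t),x)\nabla w^t_i)=0$ with $w^t|_{\partial\Omega}=f$. The normal traces in (\ref{S3E21}) are then $\mathcal{N}\cdot(\nu_i(A(F^t),x)\nabla w^t_i)$. They are defined through the divergence-free vector fields $\nu_i(A(F^t),x)\nabla w^t_i\in L^2(\Omega)$, and the normal trace map is continuous from $\{V\in L^2:\operatorname{div}V\in L^2\}$ to $W^{-\frac12,2}(\partial\Omega)$ by \cite{GR86}. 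So it suffices to prove
\[
\nu_i(A(F^t),\cdot)\nabla w^t_i\to \nu_i(A(\mu),\cdot)\nabla u_i\quad\text{in }L^2(\Omega).
\]

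\emph{Step 1 (uniform bounds and $F^t\to\mu$).} Let $E\in (H^1(\Omega))^M$ be an extension of $f$. Testing the equation for $w^t$ with $w^t_i-E_i\in H^1_0$ and using $\nu_*\le\nu_i\le C$ gives $\|w^t\|_{H^1}\le C\|f\|_{W^{1/2,2}}$ uniformly in $t$. Hence $\|F^t-\mu\|_{L^2}=t\|w^t\|_{L^2}\le Ct$, and property (ii) yields $\|A(F^t)-A(\mu)\|_{L^2}\le L_\mu Ct\to0$.

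\emph{Step 2 (comparison with $u$).} Set $\overline{w}=w^t-u\in (H^1_0)^M$. Exactly as in (\ref{S3E4}),
\[
\nu_*\|\nabla\overline{w}_i\|_{L^2}\le \|(\nu_i(A(F^t),\cdot)-\nu_i(A(\mu),\cdot))\nabla u_i\|_{L^2}.
\]
The plain $L^2$ convergence of $A(F^t)$ is not enough to close this, since $\nabla u_i$ is only $L^2$ a priori. We use Hölder continuity of $\nu_i$ together with regularity of $u$. By (i), $A(\mu)\in C^{0,\alpha_\mu}(\overline\Omega)$, so $x\mapsto\nu_i(A(\mu)(x),x)$ is Hölder continuous. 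Because $\partial\Omega\in C^{1,\beta}$ and $f\in C^{1,\beta}$, Schauder estimates for divergence-form equations give $u_i\in C^{1,\alpha'}(\overline\Omega)$, and in particular $\nabla u_i\in L^\infty$. Then
\[
\|(\nu_i(A(F^t),\cdot)-\nu_i(A(\mu),\cdot))\nabla u_i\|_{L^2}\le \|\nabla u_i\|_\infty [\nu_i]_\alpha\,\|A(F^t)-A(\mu)\|^{\alpha}_{L^{2\alpha}}\le C t^{\alpha}\to0.
\]
This last estimate uses $|\nu(a,x)-\nu(b,x)|\le[\nu]_\alpha|a-b|^\alpha$ and Hölder's inequality on the bounded domain $\Omega$. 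Dominated convergence along subsequences would also suffice. Therefore $\nabla w^t\to\nabla u$ in $L^2$.

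\emph{Step 3 (conclusion).} Write
\[
\nu_i(A(F^t),\cdot)\nabla w^t_i-\nu_i(A(\mu),\cdot)\nabla u_i=\nu_i(A(F^t),\cdot)\nabla\overline{w}_i+\bigl(\nu_i(A(F^t),\cdot)-\nu_i(A(\mu),\cdot)\bigr)\nabla u_i .
\]
The first term tends to zero because $\nu_i$ is bounded and $\nabla\overline w_i\to0$ in $L^2$. The second term tends to zero by Step 2. Both fields are divergence free, so their normal traces converge in $W^{-\frac12,2}(\partial\Omega)$, which proves (\ref{S3E21}).

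The main obstacle is Step 2, namely securing $\nabla u\in L^\infty$. This requires a $C^{1,\alpha}$ boundary-regularity result for $\operatorname{div}(a(x)\nabla u)=0$ with Hölder-continuous coefficient $a$, on a $C^{1,\beta}$ domain with $C^{1,\beta}$ Dirichlet data, e.g. Gilbarg–Trudinger Theorem 8.33. This is precisely why hypotheses (i), the Hölder continuity of $\nu$, and the $C^{1,\beta}$ assumptions appear in the statement. If a sharper estimate were needed, one could alternatively argue via subsequences and dominated convergence, using a.e. convergence of $A(F^{t_k})$ together with $|\nabla u_i|^2\in L^1$. This alternative only needs continuity and boundedness of $\nu_i$.
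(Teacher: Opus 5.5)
Your proposal is correct and follows essentially the same argument as the paper. Both run the energy comparison of $(F^t-\mu)/t$ with $u$, combine the H\"older continuity of $\nu$ with $\nabla u\in L^\infty$ from Schauder theory and the Lipschitz property (ii) of $A$ to get an $O(t^{\alpha})$ rate, and conclude via convergence in $H(\Omega,\operatorname{div})$ and continuity of the normal trace. (Two remarks: the $C^{1,\alpha}$ regularity of the weak solution $u$ comes from the solvability result Gilbarg--Trudinger Theorem 8.34 together with uniqueness of weak solutions, not from the a priori estimate 8.33; and your closing dominated-convergence remark is valid, contradicting your earlier claim that plain $L^2$ convergence of $A(F^t)$ cannot close the estimate. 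It shows the qualitative limit needs only continuity of $A$ and continuity and boundedness of $\nu$, at the cost of losing the rate.)
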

		\begin{rem}
			\label{S3R9}
			The elliptic equation (\ref{S3E20}) is a standard linear elliptic PDE for which uniqueness and existence results are available. The existence and uniqueness questions regarding (\ref{S3E19}) are more involved and we only point out that under the assumptions of \Cref{S3L8} the existence of solutions can be proven in a similar fashion as in \Cref{S3L1}. The statement of \Cref{S3L8} is that no matter what solution $F^t$ we select for the system (\ref{S3E19}), we will always observe the convergence (\ref{S3E21}) in the limit $t\rightarrow 0$.
		\end{rem}
		\begin{proof}[Proof of \Cref{S3L8}]
			\textit{Strategy: }In view of (\ref{S3E21}), we need to establish the convergence of the normal traces of certain vector fields. Instead of working directly with the normal traces, we will show that the corresponding vector fields converge in $H(\Omega,\operatorname{div})$. Due to the continuity of the normal trace, \cite[I \S 2 Theorem 2.5]{GR86}, this will imply the convergence of the normal traces.
            
            \textit{Step 1: Boundedness of $\kappa^t:=F^t-\mu$ in $H^1$.} Observe that $\kappa^t_i$ solves the following boundary value problem
			\begin{gather}
				\label{S3E22}
				\operatorname{div}\left(\tilde\nu_i\nabla \kappa^t_i\right)=0,\quad \kappa^t_i|_{\partial\Omega}=tf_i,
			\end{gather}
            where
            \begin{gather}
                \nonumber
                \tilde{\nu}_i(x):=\nu_i(A(\mu+\kappa^t)(x),x).
            \end{gather}
			According to \cite[Theorem 8.34]{GT01} and our assumptions on $A$ and $\nu$ there is a unique solution $u=(u_1,\dots,u_M)\in \left(C^{1}(\overline{\Omega})\right)^M$ to (\ref{S3E20}). We observe also that $\kappa^t_i-tu_i|_{\partial\Omega}=0$. 
            This allows us to compute
			\begin{gather}
				\label{eq:kappakappa}
				\int_{\Omega} \tilde{\nu}_i\nabla \kappa^t_i\cdot \nabla \kappa^t_idx=t\int_{\Omega}\tilde{\nu}_i\nabla \kappa^t_i\cdot \nabla u_idx+\int_{\Omega} \tilde{\nu}_i\nabla \kappa^t_i\cdot \nabla (\kappa^t_i-tu_i)dx=  t\int_{\Omega}\tilde{\nu}_i\nabla \kappa^t_i\cdot \nabla u_idx
			\end{gather}
			where we integrated by parts and used (\ref{S3E22}) in the last step. Since the $\nu_i$ are uniformly bounded below and above, we  deduce that
			\begin{gather}
				\label{S3E23}
				\|\nabla \kappa^t_i\|_{L^2(\Omega)}\leq \lambda |t|\|\nabla u_i\|_{L^2(\Omega)},
			\end{gather}
			where $\lambda>0$ denotes a generic constant which may change its value from line to line and may depend on $\nu$ and $\Omega$, but is always independent of $t$, $\kappa^t$, $f$, $\mu$ and $A$. By means of Poincar\'{e}'s inequality we find
			\begin{gather}
				\nonumber
				\|\kappa^t_i\|_{L^2(\Omega)}\leq |t|\|u_i\|_{L^2(\Omega)}+\|\kappa^t_i-tu_i\|_{L^2(\Omega)}\leq |t|\|u_i\|_{L^2(\Omega)}+\lambda \|\nabla \kappa^t_i-t\nabla u_i\|_{L^2(\Omega)}\leq \lambda |t|\|u_i\|_{H^1(\Omega)}, 
			\end{gather}
			where we used the triangle inequality and (\ref{S3E23}) in the last step. We hence conclude that
			\begin{gather}
				\label{S3E24}
				\|\kappa^t_i\|_{H^1(\Omega)}\leq \lambda |t|\|u_i\|_{H^1(\Omega)}.
			\end{gather}
			\textit{Step 2: $L^2(\Omega)$-convergence of the vector fields.} 
            Set $\nu_i^\mu(x):=\nu_i(A(\mu)(x),x)$. By \eqref{eq:kappakappa}, we have
			\begin{gather}
				\nonumber
				\int_{\Omega}\tilde{\nu}_i|\nabla \kappa^t_i-t\nabla u_i|^2dx=-t\int_{\Omega}\tilde{\nu}_i\nabla \kappa^t_i\cdot \nabla u_idx+t^2\int_{\Omega} \tilde{\nu}_i|\nabla u_i|^2dx
				\\
				\label{S3E25}
				=-t\int_{\Omega}\tilde{\nu}_i(\nabla \kappa^t_i-t\nabla u_i)\cdot \nabla u_idx=t\int_{\Omega}(\nu^\mu_i-\tilde{\nu}_i)(\nabla \kappa^t_i-t\nabla u_i)\cdot \nabla u_idx
			\end{gather}
			where we used (\ref{S3E20}) in the last step. By the H\"{o}lder continuity of $\nu$, we have
			\begin{gather}
				\nonumber
				|\tilde{\nu}_i-\nu^\mu_i|=|\nu_i(A(\mu+\kappa^t)(x),x)-\nu_i(A(\mu)(x),x)|\leq \lambda |A(\mu+\kappa^t)(x)-A(\mu)(x)|^{\alpha}.
			\end{gather}
			We combine this with (\ref{S3E25}) and obtain
			\begin{gather}
				\nonumber
				\int_{\Omega}\tilde{\nu}_i|\nabla \kappa^t_i-t\nabla u_i|^2dx\leq \lambda |t|\|\nabla \kappa^t_i-t\nabla u_i\|_{L^2(\Omega)}\||A(\mu+\kappa^t)-A(\mu)|^{\alpha}|\nabla u_i|\|_{L^2(\Omega)}
				\\
				\label{S3E26}
				\leq\lambda |t|\|\nabla \kappa^t_i-t\nabla u_i\|_{L^2(\Omega)}\|A(\mu+\kappa^t)-A(\mu)\|^{\alpha}_{L^2(\Omega)}\|\nabla u_i\|_{L^\infty(\Omega)}.
			\end{gather}
			Recall that $u_i\in C^1(\overline{\Omega})$ and thus $\|\nabla u_i\|_{L^\infty(\Omega)}<\infty$. By properties of $A$ and \eqref{S3E24} we find
            \begin{gather}
                \nonumber
                \|A(\mu+\kappa^t)-A(\mu)\|_{L^2(\Omega)}\leq L_{\mu}\|\kappa^t\|_{L^2(\Omega)}\le \lambda L_\mu |t|\|u\|_{H^1(\Omega)}.
            \end{gather}
            Then, by (\ref{S3E26}) we have
			\begin{gather}
				\nonumber
				\int_{\Omega}\tilde{\nu}_i|\nabla \kappa^t_i-t\nabla u_i|^2dx\leq \lambda L^{\alpha}_{\mu}\|u\|^{\alpha}_{H^1(\Omega)}|t|^{1+\alpha}\|\nabla u_i\|_{L^\infty(\Omega)}\|\nabla \kappa^t_i-t\nabla u_i\|_{L^2(\Omega)}.
			\end{gather}
			Finally, using that $\tilde{\nu}_i$ is uniformly bounded from below, we obtain
			\begin{gather}
				\label{S3E27}
				\left\|\nabla \frac{\kappa^t_i}{t}-\nabla u_i\right\|_{L^2(\Omega)}\leq L^{\alpha}_{\mu}\lambda |t|^{\alpha}\|u\|^{\alpha}_{H^1(\Omega)}\|\nabla u_i\|_{L^\infty(\Omega)}\rightarrow 0\text{ as }t\rightarrow 0.
			\end{gather}
            
			Since $\nabla F^t_i=\nabla \kappa^t_i$,  we can estimate
			\begin{gather}
				\nonumber
				\left\|\frac{\nu_i(A(F^t)(x),x)\nabla F^t_i}{t}-\nu_i(A(\mu)(x),x)\nabla u_i\right\|_{L^2(\Omega)}=\left\|\tilde{\nu}_i\frac{\nabla \kappa_i}{t}-\nu^\mu_i\nabla u_i\right\|_{L^2(\Omega)}
				\\
				\label{S3E28}
				\leq \left\|\tilde{\nu}_i\left(\frac{\nabla \kappa_i}{t}-\nabla u_i\right)\right\|_{L^2(\Omega)}+\|(\tilde{\nu}_i-\nu_i^\mu)\nabla u_i\|_{L^2(\Omega)}.
			\end{gather}
			Since $\tilde{\nu}_i$ is uniformly bounded above, by (\ref{S3E27}) the first summand on the right hand side of (\ref{S3E28}) converges to zero as $t\to0$. As for the second term, we can as before use the estimate
			\begin{gather}
				\nonumber
				\|(\tilde{\nu}_i-\nu^\mu_i)\nabla u_i\|_{L^2(\Omega)}\leq \lambda \||A(\mu+\kappa^t)-A(\mu)|^{\alpha}|\nabla u_i|\|_{L^2(\Omega)}\leq \lambda L^{\alpha}_{\mu}\|\nabla u_i\|_{L^\infty(\Omega)}\|u\|^{\alpha}_{H^1(\Omega)}|t|^{\alpha}
			\end{gather}
			which converges to zero as $t\rightarrow 0$. Combining this with (\ref{S3E28}) we find
			\begin{gather}
				\nonumber
				\left\|\frac{\nu_i(A(F^t)(x),x)\nabla F^t_i}{t}-\nu_i(A(\mu)(x),x)\nabla u_i\right\|_{L^2(\Omega)}\rightarrow 0\text{ as }t\rightarrow 0.
			\end{gather}
			\textit{Step 3: Concluding the proof.} Since $\operatorname{div}\left(\frac{\nu_i(A(F^t)(x),x)\nabla F^t_i}{t}-\nu_i(A(\mu)(x),x)\nabla u_i\right)=0$ in view of (\ref{S3E19})-(\ref{S3E20}), we infer from step 2 that
			\begin{gather}
				\nonumber
				\frac{\nu_i(A(F^t)(x),x)\nabla F^t_i}{t}\rightarrow\nu_i(A(\mu)(x),x)\nabla u_i\text{ in }H(\Omega,\operatorname{div})\text{ as }t\rightarrow 0.
			\end{gather}
			Thus, the lemma follows by the continuity of the normal trace operator, cf.\ \cite[I \S 2 Theorem 2.5]{GR86}.
		\end{proof}
		\subsubsection{Proof of \texorpdfstring{\Cref{S2T3}}{Theorem S2T3}}
		\begin{proof}[Proof of \Cref{S2T3}]
			
			\underline{(i)$\Rightarrow$ (ii):} \textit{Step 1: Reduction to a linear problem.} It follows first from \Cref{S3L7} and (i,a) that we have
            \begin{gather}
                \nonumber
                \phi^1(p,s,x)-\phi^2(p,s,x)=r\text{ for all }(p,s,x)\in \mathbb{R}^{M+1}\times \partial\Omega
            \end{gather}
            and some $r$ independent of $(p,s,x)$. Upon a normalisation we may without loss of generality assume that $r=0$.
            
            For $(\gamma,\tau)\in \bigl(W^{\frac{1}{2},2}(\partial\Omega)\bigr)^{M+1}$, we then let $(T^1,c^1)$ and $(T^2,c^2)$ denote any fixed solutions to (\ref{S2E3}) with $c^1|_{\partial\Omega}=c^2|_{\partial\Omega}=\gamma$ and $T^1|_{\partial\Omega}=T^2|_{\partial\Omega}=\tau$, with coefficients $(D^1_i,\phi^1,\epsilon,q)$ and $(D^2_i,\phi^2,\epsilon,q)$ respectively, cf.\ \Cref{S3C4}. Further define
            \begin{equation}\label{eq:sigmatildesigma}
\sigma^1(x):=\phi^1(c^1(x),T^1(x),x) \text{ and }\sigma^2(x):=\phi^2(c^2(x),T^2(x),x).
            \end{equation}
            As in the proof of \Cref{S2T1}, we set $\eta_0(x):=\phi^1(\gamma(x),\tau(x),x)$ and deduce from (\ref{S2E3}) that
            \begin{equation}\label{eq:sigmatildesigma2}
                \sigma^1=L^{-1}_{\epsilon}(c^1 \cdot q)\text{, }\sigma^2=L^{-1}_{\epsilon}(c^2 \cdot q),
            \end{equation}
            where for given $v\in L^2(\Omega)$, $L^{-1}_{\epsilon}(v)$ denotes the unique weak solution of the boundary value problem
			\begin{gather}
				\label{S3E29}
				\operatorname{div}(\epsilon \nabla L^{-1}_{\epsilon}(v))=v \text{ and }L^{-1}_{\epsilon}(v)|_{\partial\Omega}=\eta_0.
			\end{gather}
			According to \Cref{S3L3}, $\phi^1$ and $\phi^2$ admit inverse functions $h^1,h^2\in \dot{C}^1_b(\mathbb{R}^{M+1}\times \overline{\Omega})$, respectively, so that \eqref{eq:sigmatildesigma} implies
            \begin{gather}
                \nonumber
                T^1(x)=h^1(c^1(x),\sigma^1(x),x)\text{ and }T^2(x)=h^2(c^2(x),\sigma^2(x),x).
            \end{gather}
            By \eqref{eq:sigmatildesigma2} we obtain
			\begin{gather}
				\label{S3E30}
				T^1(x)=h^1(c^1(x),L^{-1}_{\epsilon}(c^1\cdot q)(x),x)\text{ and }T^2(x)=h^2(c^2,L^{-1}_{\epsilon}(c^2\cdot q)(x),x).
			\end{gather}
			We now define
            \begin{gather}
                \label{Nu}
    \nu^1_i(p,s,x):=D^1_i(p,h^1(p,s,x),x)\text{ and }\nu^2_i(p,s,x):=D^2_i(p,h^2(p,s,x),x)
            \end{gather}
            and notice that $\nu^1_i,\nu^2_i\in C^{0,1}_b(\mathbb{R}^{M+1}\times \overline{\Omega})$. Further, it follows from (\ref{S2E3}) and (\ref{S3E30}) that for all $1\leq i\leq M$ we have
			\begin{gather}
				\label{S3E31}
				\operatorname{div}(\nu^1_i(c^1(x),L^{-1}_{\epsilon}(c^1\cdot q)(x),x)\nabla c^1_i)=0\text{, }c^1|_{\partial\Omega}=\gamma,
				\\
				\label{S3E32}
				\operatorname{div}(\nu^2_i(c^2(x),L^{-1}_{\epsilon}(c^2\cdot q)(x),x)\nabla c^2_i)=0\text{, }c^2|_{\partial\Omega}=\gamma.
			\end{gather}
            
			We now observe that, since $\phi^1(p,s,x)=\phi^2(p,s,x)$ for all $(p,s,x)\in \mathbb{R}^{M+1}\times \partial\Omega$, the same is true for their inverses, i.e.\ $h^1(p,s,x)=h^2(p,s,x)$ for all $(p,s,x)\in \mathbb{R}^{M+1}\times \partial\Omega$. This implies that for any fixed $\eta_0\in W^{\frac{1}{2},2}(\partial\Omega)$ and for any $\gamma\in \left(W^{\frac{1}{2},2}(\partial\Omega)\right)^M$ we may pick
            \begin{gather}
                \nonumber
                \tau(x):=h^1(\gamma(x),\eta_0(x),x)=h^2(\gamma(x),\eta_0(x),x),
            \end{gather}
            so that
            \begin{gather}
                \nonumber
                \phi^1(\gamma(x),\tau(x),x)=\eta_0(x)=\phi^2(\gamma(x),\tau(x),x).
            \end{gather}
            We then define, for fixed $\eta_0\in C^{1,\beta}(\partial\Omega)$, the following operator
			\begin{gather}
				\label{S3E33}
				A\colon\left(L^2(\Omega)\right)^M\rightarrow \left(L^2(\Omega)\right)^{M+1}\text{, }v\mapsto \left(v,L^{-1}_{\epsilon}(v\cdot q)\right)
			\end{gather}
			and observe that (\ref{S3E31}) and (\ref{S3E32}) become
			\begin{gather}
				\label{S3E34}
				\operatorname{div}\left(\nu^1_i(A(c^1)(x),x)\nabla c^1_i\right)=0\text{, }c^1|_{\partial\Omega}=\gamma\text{ and }\operatorname{div}\left(\nu^2_i(A(c^2)(x),x)\nabla c^2_i\right)=0\text{, }c^2|_{\partial\Omega}=\gamma. 
			\end{gather}
            
			Our goal now is to use \Cref{S3L8}. First, we recall that we have $\nu^1_i,\nu^2_i\in C^{0,1}_b(\mathbb{R}^{M+1}\times \overline{\Omega})$ and that $\nu^1_i$ and $\nu^2_i$ are bounded away from zero because so are $D^1_i$ and $D^2_i$. So we are left with verifying the properties of the operator $A$. Clearly $A$ maps $(L^2(\Omega))^M$ into $(L^2(\Omega))^{M+1}$. Further, if $\mu\in \mathbb{R}^M\subset (L^2(\Omega))^M$ is given and $\eta_0\in C^{1,\beta}(\partial\Omega)$, then according to \cite[Theorem 8.34]{GT01}
            \begin{gather}
                \nonumber
                L^{-1}_{\epsilon}(\mu\cdot q)\in C^{1,\beta}(\overline{\Omega})\text{ so that }A(\mu)\in \left(C^{0,\beta}(\overline{\Omega})\right)^{M+1}\text{ for every }\mu\in \mathbb{R}^M.
            \end{gather}
            We show now that $A$ is (globally) Lipschitz continuous. It is clearly enough to estimate
            \begin{gather}
                \nonumber
                \|L^{-1}_{\epsilon}((v+w)\cdot q)-L^{-1}_{\epsilon}(v\cdot q)\|_{L^2(\Omega)}\text{ for every }v,w\in L^2(\Omega).
            \end{gather}
            We observe that $\alpha:=L^{-1}_{\epsilon}((v+w)\cdot q)-L^{-1}_{\epsilon}(v\cdot q)$ is a weak solution to
            \begin{gather}
                \nonumber
                \operatorname{div}(\epsilon \nabla \alpha)=w\cdot q\text{ and }\alpha|_{\partial\Omega}=0.
            \end{gather}
            Thus, by classical energy estimates for elliptic PDEs, we obtain
            \(
            \|\alpha\|_{L^2(\Omega)}\le C(\Omega,\epsilon)|q|\|w\|_{L^2(\Omega)}.
            \)
            We deduce that $A$ is globally Lipschitz continuous.
			
			We conclude that for any fixed $\eta_0\in C^{1,\beta}(\partial\Omega)$, the operator $A$ and coefficients $\nu^1_i,\nu^2_i$ satisfy the requirements of \Cref{S3L8}. We follow here now the reasoning of \cite{Sun96} and observe that condition (i,b) of \Cref{S2T3} implies that
			for every fixed $f=(f_1,\dots,f_M)\in \left(C^{1,\beta}(\partial\Omega)\right)^M$, $\mu\in \mathbb{R}^M$ and $t>0$ there exist solutions $c^1_t$ and $c^2_t$ of (\ref{S3E34}) with
            \begin{gather}
                \nonumber
            \gamma=\mu+tf\text{ and }\mathcal{N}\cdot (\nu^1_i(A(c^1_t),x)\nabla c^1_{t,i})=\mathcal{N}\cdot (\nu^2_i(A(c^2_t),x)\nabla c^2_{t,i}).     
            \end{gather}
            It then follows from \Cref{S3L8}
			\begin{gather}
				\nonumber
				\mathcal{N}\cdot (\nu^1_i(A(\mu)(x),x)\nabla u^1_i)=\lim_{t\rightarrow 0}\frac{\mathcal{N}\cdot (\nu^1_i(A(c^1_t),x)\nabla c^1_{t,i})}{t}
				\\
				\label{S3E35}
				=\lim_{t\rightarrow 0}\frac{\mathcal{N}\cdot (\nu^2_i(A(c^2_t),x)\nabla c^2_{t,i})}{t}=\mathcal{N}\cdot (\nu^2_i(A(\mu)(x),x)\nabla u^2_i) 
			\end{gather}
			where $u^1,u^2\in (H^1(\Omega))^M$ weakly solve the linear PDEs
			\begin{gather}
				\label{S3E36}
				\operatorname{div}(\nu^1_i(A(\mu)(x),x)\nabla u^1_i)=0\text{ in }\Omega\text{, }u^1|_{\partial\Omega}=f\text{ and }\operatorname{div}(\nu^2_i(A(\mu)(x),x)\nabla u^2_i)\text{ in }\Omega\text{, }u^2|_{\partial\Omega}=f.
			\end{gather}
			We observe that (\ref{S3E36}) are decoupled systems of linear scalar elliptic PDEs which admit unique weak solutions. Thus, we can define single-valued Dirichlet-to-Neumann maps
            \begin{gather}
                \nonumber
                \Lambda_{\operatorname{DN}}[\nu^1_i]:W^{\frac{1}{2},2}(\partial\Omega)\rightarrow W^{-\frac{1}{2},2}(\partial\Omega)\text{, }\Lambda_{\operatorname{DN}}[\nu^2_i]:W^{\frac{1}{2},2}(\partial\Omega)\rightarrow W^{-\frac{1}{2},2}(\partial\Omega)
            \end{gather}
            which map $f_i\in W^{\frac{1}{2},2}(\partial\Omega)$ onto $\mathcal{N}\cdot (\nu^1_i(A(\mu)(x),x)\nabla u^1_i)$ and $\mathcal{N}\cdot (\nu^2_i(A(\mu)(x),x)\nabla u^2_i)$ respectively. Then (\ref{S3E35}), the density of $C^{1,\beta}(\partial\Omega)$ functions in $W^{\frac{1}{2},2}(\partial\Omega)$ \cite[Proposition 3.40]{DD12}, and the continuity of $\Lambda_{\operatorname{DN}}[\nu^1_i]$ and $\Lambda_{\operatorname{DN}}[\nu^2_i]$ imply
			\begin{gather}
				\label{S3E37}
				\Lambda_{\operatorname{DN}}[\nu^1_i]=\Lambda_{\operatorname{DN}}[\nu^2_i]\text{ for all }1\leq i\leq M.
			\end{gather}
			\textit{Step 2: Reconstruction of the coefficients.} We can now employ the standard linear, scalar Calder\'{o}n uniqueness result \cite[Theorem 1.1]{Hab15} and conclude
			\begin{gather}
				\label{S3E38}
				\nu^1_i(A(\mu)(x),x)=\nu^2_i(A(\mu)(x),x)\text{ for every }\mu\in \mathbb{R}^M\text{ and every }x\in \overline{\Omega},
			\end{gather}
			and  for any prescribed boundary condition $\eta_0\in C^{1,\beta}(\partial\Omega)$ for the solution operator $L^{-1}_{\epsilon}$ in (\ref{S3E29}).
			
			We can now fix any $\mu\in \mathbb{R}^M$, $s\in \mathbb{R}$ and let $\omega_0$  be the unique $C^{1,\beta}(\overline{\Omega})$ weak solution of
            \begin{gather}
                \nonumber
                \operatorname{div}(\epsilon \nabla \omega_0)=q\cdot \mu\text{ in }\Omega\text{ and }\omega_0|_{\partial\Omega}=0.
            \end{gather}
            We then fix any $y\in \overline{\Omega}$ and observe that $\omega(x):=\omega_0(x)+s-\omega_0(y)$ is the unique (weak) $C^{1,\beta}(\overline{\Omega})$-solution to
            \begin{gather}
                \nonumber
                \operatorname{div}(\epsilon \nabla \omega)=q\cdot \mu\text{ and }\omega|_{\partial\Omega}=s-\omega_0(y).
            \end{gather}
            Thus, for $\eta_0(x):=s-\omega_0(y)\in C^{1,\beta}(\partial\Omega)$ we find, recall (\ref{S3E29}), $\omega=L^{-1}_{\epsilon}(q\cdot\mu)$, so that
            \begin{gather}
\nonumber
L^{-1}_{\epsilon}(q\cdot \mu)(y)=\omega(y)=s.
            \end{gather}
             Consequently, for this choice of $\eta_0$, we obtain from (\ref{S3E38}), (\ref{S3E33}) and  (\ref{Nu})
			\begin{equation}\label{S3E39}
			\begin{split}D^1_i(\mu,h^1(\mu,s,y),y)&=D^1_i(\mu,h^1(\mu,L^{-1}_{\epsilon}(\mu\cdot q)(y),y),y)=\nu^1_i(\mu,L^{-1}_{\epsilon}(\mu\cdot q)(y),y)
				\\
&=\nu^2_i(\mu,L^{-1}_{\epsilon}(\mu\cdot q)(y),y)=D^2_i(\mu,h^2(\mu,s,y),y)
                \end{split}
			\end{equation}
			for every $(\mu,s,y)\in \mathbb{R}^{M+1}\times \overline{\Omega}$ and $1\leq i\leq M$. By \Cref{S3P5}, if we prescribe the constant boundary conditions
            \begin{gather}
                \nonumber
                c^1|_{\partial\Omega}=\mu=c^2|_{\partial\Omega},
            \end{gather}
            then it follows that $c^1(x)=\mu=c^2(x)$ throughout $\Omega$. We then make use of (\ref{S3E30}) and condition (i,c) of \Cref{S2T3} to conclude that, for the choice $\eta_0(x):=s-\omega_0(y)$,
			\begin{equation}\label{S3E40}
h^1(\mu,s,y)=h^1(c^1(y),L^{-1}_{\epsilon}(q\cdot c^1)(y),y)=T^1(y)
				=T^2(y)=h^2(c^2(y),L^{-1}_{\epsilon}(c^2\cdot q)(y),y)=h^2(\mu,s,y)
			\end{equation}
            for every $(\mu,s,y)\in \mathbb{R}^{M+1}\times \overline{\Omega}$.
			We deduce from this
            \begin{gather}
                \nonumber
\phi^1(p,s,x)=\phi^2(p,s,x)\text{ for all }(p,s,x)\in \mathbb{R}^{M+1}\times \overline{\Omega},
            \end{gather}
            since $h^1$ is the inverse of $\phi^1$, recall \Cref{S3L3}. Finally, for fixed $(p,t,x)\in \mathbb{R}^{M+1}\times \overline{\Omega}$ we can pick $s\in \mathbb{R}$ such that $t=h^1(p,s,x)=h^2(p,s,x)$. Plugging this in (\ref{S3E39}), we deduce
			\begin{gather}
			    \nonumber D^1_i(p,t,x)=D^2_i(p,t,x)\text{ for all }(p,t,x)\in \mathbb{R}^{M+1}\times \overline{\Omega}.
			\end{gather}
			This proves overall that $\phi^1=\phi^2$ and $D^1_i=D^2_i$ for all $1\leq i\leq M$. Hence we proved the implication (i) $\Rightarrow$ (ii).
			\newline
			\newline
			\underline{(ii) $\Rightarrow$ (i):} We observe that (\ref{S2E3}) only depends on the gradient of the potential $\phi$ so that, if $D^1_i=D^2_i$ for all $1\leq i\leq M$ and $\phi^1=\phi^2+r$ for some constant $r\in \mathbb{R}$, they will induce the same PDEs and hence the solution spaces coincide. So we only need to notice that the solution space is non-empty, \Cref{S3C4}, to see that (ii) implies (i).
		\end{proof}
		\section*{Acknowledgements}
		The research was supported in part by the MIUR Excellence Department Project awarded to Dipartimento di Matematica, Università di Genova, CUP D33C23001110001. This work was supported by the Italian Ministry of the Environment and Energy Security (MASE) through the Mission Innovation 2.0 project 'Sole di notte' (ID: MI\_ERE\_00192), CUP: F33C25001220001. Co-funded by the European Union (ERC, SAMPDE, 101041040). Views and opinions expressed are however those of the authors only and do not necessarily reflect those of the European Union or the European Research Council. Neither the European Union nor the granting authority can be held responsible for them. Co-funded by the European Union-Next Generation EU, Missione 4 Componente 1 CUP D53D23005770006. This project is funded by the European Union Horizon Europe Grant Agreement n.\ 101251004. The project is supported by the Clean Hydrogen Partnership and its members.

		\section*{Data availability}
		No new data was created in this study.
		\section*{Conflict of interest}
		The authors declare that they have no conflict of interest.
		\appendix
		\setcounter{equation}{0}\renewcommand\theequation{A\arabic{equation}}
		\section{The model}
		\label{Model}
		\subsection{Modelling the ion concentrations}
		\label{Concetrations}
		We recall that electrolysis is based on the following reactions (see (\ref{S1E1})-(\ref{S1E3}))
		\begin{gather}
			\nonumber
			\text{Anode: }2\text{OH}^{-}\rightarrow \text{H}_2\text{O}+\frac{\text{O}_2}{2}+2\text{e}^-
			\\
			\nonumber
			\text{Cathode: }2\text{H}_2\text{O}+2\text{e}^-\rightarrow \text{H}_2+2\text{OH}^{-}
			\\
			\nonumber
			\text{Overall reaction: }\text{H}_2\text{O}\rightarrow \text{H}_2+\frac{\text{O}_2}{2}
		\end{gather}
		In practice KOH may also be added in the case of an AEM electrolyser, but is in principle not required for an electrolyser to function \cite{Muh25}. We denote the concentration of the $i$-th particle species by $c_i$, where $c_i(t,x)$ is a function of time $t$ and position $x$. Let $N_i$ denote the $i$-th species particle flux and $g_i$ denote the source of the $i$-th particle. By mass conservation, also known as Fick's second law, it follows that
		\begin{gather}
			\label{S1E4}
			\partial_tc_i+\operatorname{div}(N_i)=g_i.
		\end{gather}
		Following Fick's first law we suppose that
        \begin{gather}
            \nonumber
            N_i=-D_i\nabla c_i,
        \end{gather}
        where $D_i$ is the diffusion coefficient of the $i$-th particle species. We further make the simplifying assumption that the diffusion tensor is scalar, i.e.\ the underlying medium is isotropic. Inserting this relationship into (\ref{S1E4}) gives us the following parabolic diffusion equation
		\begin{gather}
			\label{S1E5}
			\partial_tc_i-\operatorname{div}(D_i\nabla c_i)=g_i.
		\end{gather}
		We emphasise that (\ref{S1E5}) does not take into account electrostatic forces between the ions and that a more realistic model should incorporate such forces, since some of the particles within the device are charged and hence repel or attract each other. A more realistic model is presented in \Cref{Electrostatic}. However, its analysis is more challenging and left for future work; the present manuscript focuses on the simplified equations (\ref{S1E5}).
		
		In general, the diffusion coefficients $D_i$ and the source terms are unknown to us and may all depend on the temperature of the system and on the particle concentrations, i.e.
        \begin{gather}
            \nonumber
            D_i=D_i(T,c,x,t),\quad g_i=g_i(T,c,x,t),
        \end{gather}
        where $T$ is the (scalar) temperature and $c=(c_1,\dots,c_M)$, where $M$ is the number of distinct species. In turn, $c_i=c_i(t,x)$ and $T=T(t,x)$ are assumed to be functions of time and position.

		\subsection{Modelling the electric potential}
		\label{Potential}
		We start by considering the macroscopic Maxwell equations
		\begin{gather}
			\label{S1E6}
			\operatorname{div} D=\rho_f\text{, }\operatorname{curl} H=J_f+\partial_tD\text{, }\operatorname{div} B=0\text{, }\operatorname{curl} E=-\partial_tB,
		\end{gather}
		where $B$ is the magnetic field, $E$ is the electric field, $H$ is the magnetising field, $D$ is the electric displacement field, $\rho_f$ is the free charge density and $J_f$ is the free current density. The relationship between the distinct fields is given by
        \begin{gather}
            \nonumber
            H=\frac{1}{\mu_0}B-M\text{, }D=\epsilon_0E+P,
        \end{gather}
        where $P$ is the polarisation field, $M$ is the magnetisation field, and $\mu_0$ and $\epsilon_0$ are the vacuum permeability and permittivity, respectively. The $P$- and $M$-fields are in turn related to the bound charges and electric currents, but we make here the simplifying assumption that we are dealing with a linear medium so that we find the relationships
        \begin{gather}
            \nonumber
            M=\chi_m H\text{, }P=\chi_e\epsilon_0E,
        \end{gather}
        where $\chi_m$ and $\chi_e$ are the magnetic and electric susceptibility, cf.\ \cite[Chapter 4.4 \& 5.8]{Jack62}. We obtain
        \begin{gather}
            \nonumber
            B=\mu H\text{, }D=\epsilon E,
        \end{gather}
        where $\mu=\mu_0(1+\chi_m)$ is the magnetic permeability and $\epsilon=\epsilon_0(1+\chi_e)$ is the electric permeability. Consequently (\ref{S1E6}) becomes
		\begin{gather}
			\label{S1E7}
			\operatorname{div}(\epsilon E)=\rho_f\text{, }\operatorname{curl}\left(\frac{B}{\mu}\right)=J_f+\partial_t(\epsilon E)\text{, }\operatorname{div} B=0\text{, }\operatorname{curl}E=-\partial_tB.
		\end{gather}
		In practice, an electrolyser cell will occupy some contractible (or even convex) region $\Omega\subset\mathbb{R}^3$, so that the condition $\operatorname{div} B=0$ becomes equivalent to the statement $B=\operatorname{curl}A$ for some vector potential $A$. In turn
        \begin{gather}
            \nonumber
            0=\operatorname{curl}E+\partial_tB=\operatorname{curl}(E+\partial_tA).
        \end{gather}
        Since $\Omega$ is contractible this is equivalent to the statement $E=\nabla \phi-\partial_tA$, and so (\ref{S1E7}) becomes
		\begin{gather}
			\label{S1E8}
			\operatorname{div}(\epsilon E)=\rho_f\text{, }\operatorname{curl}\left(\frac{B}{\mu}\right)=J_f+\partial_t(\epsilon E)\text{, }B=\operatorname{curl}A\text{, }E=\nabla \phi-\partial_tA.
		\end{gather}
        
		We notice that the vector potential $A$ is unique up to the addition of gradient fields and that, in turn, $\phi$ depends on the choice of $A$. Our goal now will be to fix a specific gauge in order to be able to associate a physical meaning with the potential $\phi$. If $E$ is curl-free, or equivalently $B$ is static, then we may pick $A$ to be time-independent and obtain a scalar potential with $E=\nabla \phi$, which has a physical meaning. Such a choice cannot be made for time-dependent fields. In order to associate a physical meaning to $\phi$ in time-varying electromagnetic fields, we fix any two points $x,y\in \partial\Omega$. Using a voltmeter, we are able to assign the points $x,y$ a corresponding measurement. What a voltmeter actually measures is the quantity $\int_{\gamma_{\operatorname{vm}}}E$, which corresponds to the work, per charge, that is necessary to move a charge from point $x$ to point $y$ along the circuit path $\gamma_{\operatorname{vm}}$ within the voltmeter. In general time-dependent fields, this quantity will be path dependent and hence the reading of the voltmeter will depend on the placement of the voltmeter; cf.\ \cite{Romer82} for a discussion about voltmeter measurements in time-varying electromagnetic fields.
		
		However, if we make the simplifying assumption that the voltmeter is close to $\partial\Omega$, then $\gamma_{\operatorname{vm}}$ is well-approximated by a path $\gamma$ on $\partial\Omega$ that connects $x$ and $y$. Thus we find 
		\begin{gather}
			\nonumber
			\int_{\gamma_{\operatorname{vm}}}E\approx \int_{\gamma}E.
		\end{gather}
		Since $\gamma$ is tangent to $\partial\Omega$ we see that if we can find a vector potential $A$ such that $\partial_tA \perp \partial\Omega$, then
		\begin{gather}
			\nonumber
			\int_{\gamma_{\operatorname{vm}}}E\approx \int_{\gamma}E=\int_{\gamma}\nabla \phi=\phi(y)-\phi(x).
		\end{gather}
		Consequently, our voltmeter measurements correspond to potential differences on the boundary. Of course, if $E$ is a gradient field throughout $\mathbb{R}^3$, then this approximation becomes exact.
		\newline
		\newline
		Let us therefore examine when a potential $A$ can be chosen such that $\partial_tA\perp \partial\Omega$. Since $E=\nabla \phi-\partial_t A$, we see that we can find $\partial_tA\perp\partial\Omega$ if and only if $E^\parallel$ is a gradient field, where $E^\parallel$ denotes the part of $E|_{\partial\Omega}$ that is tangent to $\partial\Omega$. In our application, $\partial\Omega$ will be diffeomorphic to a sphere and so $E^{\parallel}$ is a gradient field if and only if $\operatorname{curl}_{\partial\Omega}(E^{\parallel})=0$, where $\operatorname{curl}_{\partial\Omega}$ denotes the curl operator on $\partial\Omega$. We now make use of the fact that
		\begin{gather}
			\nonumber
			\operatorname{curl}_{\partial\Omega}(E^{\parallel})=\mathcal{N}\cdot \operatorname{curl}E,
		\end{gather}
		where $\mathcal{N}$ denotes the outward unit normal and $\operatorname{curl}$ denotes the standard Euclidean curl. Since we have $E=\nabla \phi-\partial_tA$, we get $\operatorname{curl}E=-\partial_t\operatorname{curl}A=-\partial_tB$. Hence $E^\parallel$ is a gradient field if and only if $\mathcal{N}\cdot \partial_tB=0$ or equivalently if and only if $\mathcal{N}\cdot B$ is time independent. Therefore, as long as the normal part of the magnetic field $B$ does not change in time we are able to identify a potential $\phi$ with a physical significance.
		\newline
		\newline
		To uniquely fix the temporal gauge $\partial_tA$ we may prescribe its divergence as follows
		\begin{gather}
			\label{S1E9}
			\operatorname{curl}(\partial_tA)=\partial_tB\text{, }\operatorname{div}(\epsilon \partial_tA)=0\text{ in }\Omega\text{, }\partial_tA\perp \partial\Omega.
		\end{gather}
		It is easy to see that if we have two vector potentials $A_1,A_2$ satisfying (\ref{S1E9}), then their difference must be a gradient field $\nabla f$ that satisfies the elliptic equation
		\begin{gather}
			\nonumber
			\operatorname{div}(\epsilon \nabla \partial_tf)=0\text{ and } \nabla \partial_tf\perp \partial\Omega.
		\end{gather}
		We notice that the condition $\nabla \partial_tf\perp \partial\Omega$ means that $\partial_tf$ is constant on $\partial\Omega$ and, since we are only interested in the gradient of $f$, we may suppose that $\partial_tf|_{\partial\Omega}=0$. Therefore, if $\epsilon\in L^{\infty}(\Omega)$ is bounded away from zero, we find $\nabla \partial_tf=0$ and consequently $\partial_tA_1=\partial_tA_2$, as desired.
		
		The existence of a solution is guaranteed as long as $\mathcal{N}\cdot B$ is time-independent. As explained previously, this condition implies that $E^\parallel$ is a gradient field. Therefore, if $A$ is any fixed vector potential with corresponding scalar potential $\phi$ we find $\partial_tA^{\parallel}=\nabla_{\partial\Omega}\kappa$ for some suitable function $\kappa$. We can then let $f$ be a solution to the elliptic boundary value problem
		\begin{gather}
			\nonumber
			f|_{\partial\Omega}=-\kappa \text{ and } \operatorname{div}(\epsilon \nabla f)=-\operatorname{div}(\epsilon \partial_tA) \text{ in } \Omega.
		\end{gather}
		Then $\widetilde{A}:=A+\int_0^t\nabla f ds$ is a valid vector potential with
        \begin{gather}
            \nonumber
            \partial_t\widetilde{A}=\partial_tA+\nabla f\text{, i.e.\ }(\partial_t\widetilde{A})^{\parallel}=(\partial_tA)^{\parallel}-\nabla_{\partial\Omega}\kappa=0 \text{ and }\operatorname{div}(\epsilon\partial_t\widetilde{A})=0.
        \end{gather}
		Imposing the gauge (\ref{S1E9}) we obtain the following set of equations in terms of the electric potential $\phi$ and the magnetic vector potential $A$
		\begin{gather}
			\label{S1E10}
			\operatorname{div}(\epsilon \nabla \phi)=\rho_f\text{, }\operatorname{curl}\left(\frac{\operatorname{curl}(A)}{\mu}\right)=J_f+\partial_t(\epsilon (\nabla \phi-\partial_tA))\text{, }\operatorname{div}(\epsilon\partial_tA)=0\text{, }\partial_tA\perp\partial\Omega. 
		\end{gather}
		We notice that if $B$ is static, i.e.\ $\partial_tB=0$, then the unique solution to (\ref{S1E9}) is $\partial_tA=0$ and hence we recover the identity $E=\nabla \phi$ from classical electrostatics.
		
		We further point out that it is a priori not clear to what extent the condition $\partial_t(\mathcal{N}\cdot B)=0$ on $\partial\Omega$ is a good approximation of our system under consideration and that experiments should be conducted to understand this. In the present work, we investigate the static case $\partial_tB=0$ for which this condition is trivially satisfied.
		\newline
		\newline
		The quantity $\rho_f$ in (\ref{S1E10}) denotes the free charge density, and in our application the free charges are precisely the distinct ions. In particular, if $c_i$ is the concentration of the $i$-th species, then $c_i(t,x)$ corresponds to the number of particles of species $i$ within an infinitesimal volume around $x$ (at time $t$). Therefore, the charge contribution within this infinitesimal volume of the $i$-th species is given by $q_ic_i(t,x)$ where $q_i$ is the charge of the $i$-th species. The total charge per infinitesimal volume is then the sum of the contributions of all distinct species, i.e.\
		\begin{gather}
			\nonumber
			\rho_f(t,x)=\sum_{i=1}^Mq_ic_i(t,x),
		\end{gather}
		where we recall that $M$ is the number of distinct species. Similarly, the free current density $J_f$ is the sum of the current densities of all species, $J_f=\sum_{i=1}^MJ_i$, and, in turn, the current density of the $i$-th species corresponds to the expression $J_i=q_iN_i$, where $N_i$ is the particle flux. We therefore obtain the following additional formulas
		\begin{gather}
			\label{S1E11}
			\rho_f=\sum_{i=1}^Mq_ic_i\text{ and }J_f=\sum_{i=1}^Mq_iN_i=-\sum_{i=1}^Mq_iD_i\nabla c_i,
		\end{gather}
		where we used Fick's first law in the last step. This couples (\ref{S1E10}) and (\ref{S1E5}).
		\subsection{Functional assumptions}\label{subssec:functional_assumptions}
		We recall first that according to (\ref{S1E5}), (\ref{S1E10}) and (\ref{S1E11}) we have
		\begin{gather}
			\label{S1E14}
			\partial_tc_i-\operatorname{div}(D_i\nabla c_i)=g_i,\qquad\operatorname{div}(\epsilon \nabla \phi)=\sum_{i=1}^Mq_ic_i=q\cdot c,
		\end{gather}
		where $q=(q_1,\dots,q_M)$, $c=(c_1,\dots,c_M)$ and $\cdot$ denotes the standard Euclidean scalar product.
		
		We observe that the $g_i$ in (\ref{S1E14}) represent the source terms, which are in turn related to the particle production\slash annihilation in the chemical processes involved in the electrolyser. These processes depend on the temperature as well as on the concentrations of the particles. We thus assume that the $g_i$ are of the form $g_i=g_i(t,c,T,x)$. Similarly, the diffusion of the particles is influenced by the particle concentrations and the temperature of the system and thus we suppose that $D_i=D_i(t,c,T,x)$. From these assumptions and (\ref{S1E14}) we see that
		\begin{itemize}[noitemsep]
			\item the potential $\phi$  is coupled to the concentrations $c_i$;
			\item and the concentrations are coupled to $T$ through $g_i$ and $D_i$. 
		\end{itemize}
		As mentioned in \Cref{Inverse}, we assume that there is a phenomenological relation between the potential $\phi$ and the temperature and concentrations, i.e.\  $\phi=\phi(t,c,T,x)$. We make the additional assumption that the electric permeability is not affected much by the change in temperature and particle concentrations within the device and that it remains constant in time, i.e.\ $\epsilon=\epsilon(x)$. Note that in reality $\epsilon$  depends on temperature, and so a more accurate model should include this dependence \cite[Chapter 4.4.1]{Griff13}. Finally, we take the concentrations $c=(c_1,\dots,c_M)$ and the temperature $T$ to be our phenomenologically independent quantities in the sense that we suppose that $c=c(t,x)$ and $T=T(t,x)$. The remaining quantities $g_i$, $D_i$, and $\phi$ may be expressed as functions of $c$ and $T$ (as well as $t$ and $x$).
		
		We observe that from a physical perspective it is reasonable to assume that $0<D_*\leq D_i\leq D^*<\infty$ for some constants $0<D_*<D^*<\infty$ and for all $1\leq i\leq M$. This assumption essentially turns
		\begin{gather}
			\nonumber
			\partial_tc_i-\operatorname{div}(D_i\nabla c_i)=g_i
		\end{gather}
		into a system of parabolic PDEs (or elliptic PDEs in the static case). However, we have an additional temperature dependence so that these continuity equations do not form a complete system of equations.
		
		To ensure that we have a well-behaved system of equations we have to make some additional assumptions on the coefficients appearing in the remaining PDE
		\begin{gather}
			\nonumber
			\operatorname{div}(\epsilon \nabla \phi)=q\cdot c.
		\end{gather}
		Just like in the case of the diffusion coefficients it is reasonable to suppose that
		$\epsilon\in L^{\infty}(\Omega)$ and that $\epsilon\geq \epsilon_*>0$ for a.e. $x\in \Omega$ for some constant $\epsilon_*>0$. In addition, once we make our phenomenological structural assumption $\phi=\phi(t,c(t,x),T(t,x),x)$ we may compute
		\begin{gather}
			\nonumber
			\nabla_x (\phi(t,c(t,x),T(t,x),x))=(\partial_T\phi)(t,c(t,x),T(t,x),x)(\nabla T)(t,x)
			\\
			\nonumber
			+\sum_{i=1}^M(\partial_{c_i}\phi)(t,c(t,x),T(t,x),x)(\nabla c_i)(t,x)+(\nabla_x\phi)(t,c(t,x),T(t,x),x).
		\end{gather}
		Consequently the prefactor of $\nabla T$ in the equation $\operatorname{div}(\epsilon \nabla \phi)=q\cdot c$ is $\epsilon (\partial_T\phi)$. So if we want to obtain a well-behaved equation for $T$ we should require $0<a\leq (\partial_T\phi)\leq b<\infty$ for suitable constants $0<a\leq b<\infty$. This assumption is motivated from a mathematical perspective. A physical interpretation of this assumption can be obtained as follows.
		\newline
		\newline
		Imagine we are given two (identical) electrolyser devices which occupy two bounded regions $\Omega_1$ and $\Omega_2$, where $\Omega_2=d+\Omega_1$ for some suitable $d\in \mathbb{R}^3$, and the distance between them is large enough that their electromagnetic fields do not influence each other. Since the potentials $\phi_1,\phi_2$ on $\Omega_1$, $\Omega_2$ are only unique up to constants we can fix some reference point $x_1\in \partial \Omega_1$, some reference temperature $T^{*}$ and some reference concentration $c^{*}=(c^{*}_1,\dots,c^{*}_M)$ and normalise $\phi_1$ on $\Omega_1$ such that at the initial time $t=0$, $\phi_1(0,c^{*},T^{*},x_1)=0$. Similarly, we set $x_2:=x_1+d$ and use the normalisation $\phi_2(0,c^{*},T^{*},x_2)=0$. We suppose now that we may prescribe the temperature profile $T_0(x)=T(0,x)$ and the concentration profile $c_0(x)=c(0,x)$ along the boundaries of our devices at the initial time $t=0$. We then fix some $T_2>T_1$ and pick our profiles such that $T_0(x_1)=T^{*}=T_0(x_2)$, $c_0(x_1)=c^{*}=c_0(x_2)$ and for some fixed $y_1\in \partial\Omega\setminus \{x_1\}$, letting $y_2:=d+y_1$, we prescribe $T_0(y_1)=T_1$, $T_0(y_2)=T_2$ and ensure that $c_0(y_1)=c_0(y_2)$. Then our condition essentially demands that if we compare the voltage measurement between the points $x_1$ and $y_1$ at $t=0$ with the voltage measurement between $x_2$ and $y_2$ at $t=0$ we will find that the value between $x_2$ and $y_2$ is higher than the value between $x_1$ and $y_1$.  
		\newline
		\newline
		Let us finally mention that it is customary in the chemistry literature to decompose the potential $\phi$ in distinct contributions such as
		\begin{gather}
			\label{S1E15}
			\phi=\phi_{\operatorname{OCV}}+\phi_{\operatorname{act}}+\phi_{\operatorname{conc}}+\phi_{\operatorname{ohm}}
		\end{gather}
		where $\phi_{\operatorname{OCV}}$ is the open circuit voltage, $\phi_{\operatorname{act}}$ is the activation over-potential, $\phi_{\operatorname{conc}}$ is the concentration over-potential and $\phi_{\operatorname{ohm}}$ denotes the ohmic over-potential, cf.\ \cite{Lawand24}. Sometimes additional terms are added, cf.\ \cite{MajHaasEllNaz23}. See also \cite{SeiMitBon25} for a discussion about the interpretation of some of these potentials.
		
		In our approach we deliberately do not perform any such decomposition but instead try to understand the type of measurements that are necessary to be able to reconstruct the dependence of the full potential $\phi$ on quantities such as the temperature and concentrations. We further point out that, in view of the ideal gas law, the pressure of the system should be determined by the concentrations and the temperature of the system and therefore a pressure dependence is implicitly taken into account in our approach.
		\setcounter{equation}{0}\renewcommand\theequation{B\arabic{equation}}
		\section{An extended model including electric forces}
		\label{Electrostatic}
		While modelling the evolution of the ion concentrations, cf.\ \Cref{Concetrations}, we neglected electric forces, which may, however, be of relevance in real life electrolyser devices. Therefore, a more realistic model should include these forces. Following Teorell's work \cite{Teo35}, see also \cite{GorSarWah11} for other types of diffusion models, one can make the following more realistic ansatz
		\begin{gather}
			\label{S1E12}
			N_i=-D_i\nabla c_i+\mu_iq_ic_iE
		\end{gather}
		where $N_i$ denotes the particle flux of the $i$-th species, $D_i$ is the diffusion coefficient of the $i$-th particle, $c_i$ is the concentration of the $i$-th particle (in number of particles per volume), $\mu_i$ is the mobility of the $i$-th species, $q_i$ is the charge of the $i$-th species and $E$ is the electric field.
		
		Inserting this expression into Fick's second law (\ref{S1E4}) gives a modified continuity equation of the form
		\begin{gather}
			\label{S1E13}
			\partial_tc_i+\operatorname{div}(\mu_ic_iq_iE-D_i\nabla c_i)=g_i.
		\end{gather}
		In the present work, we focus on the simpler model (\ref{S1E5}). Future work may explore the more realistic model (\ref{S1E13}).
		\bibliographystyle{plain}
		\bibliography{References}

	\end{document}